\documentclass[11pt]{amsproc}

\usepackage{mathtext}
\usepackage[cp1251]{inputenc}

\usepackage{bm}

\usepackage{amsmath}
\usepackage{amssymb}
\usepackage{amsxtra}

\def\N{{{\Bbb N}}}
\def\Z{{{\Bbb Z}}}
\def\T{{{\Bbb T}}}
\def\R{{\Bbb R}}

\def\l{{\lambda }}
\def\a{{\alpha }}
\def\D{{\Delta }}

\def\a{{\alpha}}
\def\b{{\beta}}
\def\d{{\delta}}

\def\L{{\Lambda }}

\def\){\right)}
\def\({\left(}

\numberwithin{equation}{section}

\setlength\textwidth{155mm}
\hoffset=-15mm
\setlength\textheight{230mm}
\voffset=-18mm

\newtheorem{corollary}{Corollary}[section]
\newtheorem{lemma}{Lemma}[section]
\newtheorem{theorem}{Theorem}[section]

\newtheorem{remark}{Remark}[section]

\renewcommand{\k}{\bm{k}}
\def\n{\bm{n}}
\def\x{\bm{x}}
\def\m{\bm{m}}

%\def\Xint#1{\mathchoice
%   {\XXint\displaystyle\textstyle{#1}}%
%   {\XXint\textstyle\scriptstyle{#1}}%
%   {\XXint\scriptstyle\scriptscriptstyle{#1}}%
%   {\XXint\scriptscriptstyle\scriptscriptstyle{#1}}%
%   \!\int}
%\def\XXint#1#2#3{{\setbox0=\hbox{$#1{#2#3}{\int}$}
%     \vcenter{\hbox{$#2#3$}}\kern-.5\wd0}}
%\def\dashint{\Xint-}

%\noindent
%\renewcommand{\baselinestretch}{1.2}
%\normalsize \large

\par

\sloppy

\begin{document}

\title[Asymptotics of the Lebesgue constant]{Asymptotics of the Lebesgue constants \\ for a $d$-dimensional simplex}

\author[Yurii Kolomoitsev]{Yurii Kolomoitsev$^{\text{a, b, *, 1}}$}
\address{Universit\"at zu L\"ubeck, Institut f\"ur Mathematik,
Ratzeburger Allee 160, 23562 L\"ubeck, Germany}
\email{kolomoitsev@math.uni-luebeck.de}

\author[Elijah Liflyand]{Elijah Liflyand$^{\text{c}}$}
\address{Department of Mathematics, Bar-Ilan University, 52900 Ramat-Gan, Israel}
\email{liflyand@math.biu.ac.il}

\thanks{$^\text{a}$Universit\"at zu L\"ubeck,
Institut f\"ur Mathematik,
Ratzeburger Allee 160,
23562 L\"ubeck, Germany}

\thanks{$^\text{b}$Institute of Applied Mathematics and Mechanics of NAS of Ukraine,
General Batyuk Str. 19, Slov’yans’k, Donetsk region, Ukraine, 84100}

\thanks{$^\text{c}${Department of Mathematics, Bar-Ilan University, 52900 Ramat-Gan, Israel}}

\thanks{$^1$Supported by DFG project KO 5804/1-1.}

\thanks{$^*$Corresponding author}

%\thanks{E-mail address: kolomus1@mail.ru}

\date{\today}
\subjclass[2010]{42B05, 42B08, 42A05} \keywords{Lebesgue constants, asymptotic formula, anisotropy,  Dirichlet kernel, $d$-dimensional simplex}

\begin{abstract}
In this paper an asymptotic formula is given for the Lebesgue constants generated by the anisotropically dilated $d$-dimensional simplex.
Contrary to many preceding results established only in dimension two, the obtained ones are proved in any dimension.
Also, the ``rational" and ``irrational" parts are both united and separated in one formula.
\end{abstract}

\maketitle

\section{Introduction}

In the theory of multiple Fourier series, many properties depend on or are given in terms of the Lebesgue constants, the
$L_1$ (or continuous) norms of the operators of taking partial sums. More precisely, let $\T^d \simeq (-\pi,\pi]^d$, $d=2,3,\dots$, be the
$d$-dimensional torus %, $\bm{x}^d=(x_1,\dots,x_d)\in \T^{d},$ $\bm{k}^d=(k_1,\dots,k_d)\in \Z^{d},$ $(\bm{x},\bm{k})=k_1x_1+\dots+k_dx_d,$
and
$$
\Vert f\Vert_{(d)}=\Vert f\Vert_{L_1(\T^d)}=\int_{\T^d}|f({\bm x})|\,{\rm d}{\bm x}.
$$
For the Fourier series of a function $f\in L_1(\mathbb T^d)$
$$
f(\x)\sim\sum\limits_{\bm{k}\in\mathbb Z^d}\widehat{f}(\bm{k})e^{i(\bm{x},\,\bm{k})},
$$
where
$(\bm{x},\bm{k})=k_1x_1+\dots+k_dx_d$ and
$
\widehat{f}(\bm{k})=(2\pi)^{-d} \int_{\T^d} f(\x)e^{-i(\bm{x},\bm{k})}\,{\rm d}{\bm x}
$
are the Fourier coefficients, we consider its partial sums generated by a set $W$, that is,

$$
\sum\limits_{\bm{k}\in W\cap \Z^d}\widehat{f}(\k)e^{i(\bm{x},\,\bm{k})},
$$
taking $L_1(\mathbb T^d)$ into $L_1(\mathbb T^d)$ (or $C(\mathbb T^d)$ into
$C(\mathbb T^d)$). As is well known, in most natural cases these norms are equal to

$$
\mathcal{L}_W=(2\pi)^{-d} \int_{\T^d} |\mathcal{D}_W({\bm x})|\,{\rm d}{\bm x},
$$
where
$$
\mathcal{D}_W({\bm x})=\sum\limits_{\bm{k}\in\mathbb Z^d\cap W}e^{i(\bm{x},\,\bm{k})}
$$
is the Dirichlet kernel generated by $W$. Correspondingly, $\mathcal{L}_W$ is called the Lebesgue constant
generated by $W$. One of the main features of the multivariate ($d\ge2$) theory of series
is a variety of options for ordering partial sums. In our terms, we can consider various $W$, whose geometrical
and arithmetical properties make $\mathcal{L}_W$ to behave in a diversity of ways. Information on this behavior is of
importance, sometimes crucial, in many problems of summability of Fourier series, approximation, and so on.
The reader can find a comprehensive survey of the results for various Lebesgue constants in \cite{L} (see also~\cite[Ch.~9]{TB}), where
overviewed are not only problems for the partial sums but also for the linear means of Fourier series.
In a recent paper \cite{GaLi}, just the results on the Lebesgue constants for partial sums are collected
in a condensed form.

\subsection{Specifics and history}

One of the ways to feel what kind of difficulties one faces in the multivariate case
is to compare it with the one-dimensional background, where

$$\frac1{2\pi}\int_{\mathbb T}\biggl|\sum\limits_{k=0}^n e^{ikx}\biggr|\,{\rm d}x=
\frac1{2\pi}\int_{\mathbb T}\biggl|\frac{\sin\frac{(n+1)x}2}{\sin\frac{x}2}\biggr|\,{\rm d}x=\frac4{\pi^2}\log n+O(1).$$
This nice classical result comes from the very definite ordering, which, in turn, allows one to aggregate the Dirichlet kernel.
In several dimensions, the kernel can be aggregated in a very few and mostly trivial situations.

Among various ways to distinguish between different $W$ and their Lebesgue constants, we mention the one very simple
but very determining. One type of the Lebesgue constants is formed by $W$ having at least one point with
nonzero curvature at their boundary. The other one involves only polyhedral $W$, where no points with nonzero
curvature occur. In this paper, we will deal with the Lebesgue constants of the second, ``polyhedral" type. Roughly speaking,
the Lebesgue constants of the former type are of ``spherical", exponential nature, while those of the latter
type are of ``logarithmic" nature. Each type has its own applications; in addition to those mentioned general,
we make a point of a very special application to the multivariate polynomial
interpolation on the Lissajous–Chebyshev nodes in~\cite{DEKL}.

Let us point out basic landmarks in the history of the polyhedral case. There are two main directions in defining the
Dirichlet kernels and corresponding Lebesgue constants. One is related with dilations of the fixed $W$, say, $nW$,
and the study of dependence of the Lebesgue constants on $n$. Here Belinsky's result \cite{Be} (see also \cite{Ba}, \cite{P};
for $L_p$ norms, with $p>1$, see \cite{Ash} and \cite{AC}) is of notable importance. It reads as follows.

\medskip

{\bf Theorem A.} {\it For any convex $d$-dimensional polyhedron $W\subset\mathbb R^d$, there exist two
positive constants $C_1$ and $C_2$ such that for any $n\ge1$}
\begin{equation}\label{belbi}
C_1\log^d(n+1)\le \mathcal{L}_{nW}\le C_2\log^d(n+1).
\end{equation}

More intriguing is the case where $\mathbb Z^d$ is exhausted in
a ``chaotic" way, that is, the rates of growth of the parameters in each direction are independent one of another.

\medskip

For both types, problems of asymptotic behavior of the sequence of the Lebesgue constants are central and mostly open.
Probably, the first nontrivial result of such kind was due to Daugavet \cite{Daug}. It had become clear soon that
asymptotic results are subject to the undertaken efforts only in the case where all the slopes of the sides of a
polyhedral set are rational. Kuznetsova's asymptotic formula \cite{Ku} for rhombic Lebesgue constants, with integer ratio of the sides running
to infinity (as an extension of the case of equal growth in \cite{Daug}) for a long time continued to be the best
possible asymptotic relation, though being proved, as most of others, in dimension two only.
A landmark is the following result by Skopina \cite{S} (see also \cite{KS})
for the $n$-dilations of the simplex.

\medskip

{\bf Theorem B.} %\label{leS}
{\it Let $\bm{\a}=(\a_1,\dots,\a_d)$ be a vector with positive rational coordinates.   Then,  for large values of $n$,  we have}
\begin{equation}\label{eS}
 \mathcal{L}_{n\D(\bm{\a})} =\frac{2^{d+1}(d+1)}{\pi}\log^d n+\mathcal{O}\(\log^{d-1} n\),
\end{equation}
{\it where}
$$
\D({\bm{\a}}):=\left\{\bm{\xi}\in \R_+^d\,:\,\sum_{j=1}^d\frac{\xi_j}{\a_j}\le 1\right\}
$$
{\it is a $d$-dimensional simplex and the constant in $\mathcal{O}$ depends only on $\bm{\a}$ and $d$.}

\medskip

Positive asymptotic results for rational slopes led to the natural questions about irrational ones. Started by Podkorytov in \cite{P2},
the study of this problem was continued and essentially developed in his joined work with Nazarov \cite{NP}. The main
conclusion is that even one irrational slope results, in general, in absence of asymptotics. The study of exactly when
this is the case, as well as of the possible upper and lower limits seriously involves number theory methods and is
far from being final. It would not be out of place to say that these results are, as many others, only two-dimensional.
Denoting $W_\a=\{(u,v)\,:\, 0\le v\le 1,\,\, 0\le v\le \a u\}$ and $I_n(\a)=\int_{-\pi}^\pi |\sum_{0\le k\le n}\{\a k\}e^{ikx}|\,{\rm d}x$,
where $\{\cdot\}$ is the fractional part, we present their main results as follows.

\medskip

{\bf Theorem C.} {\it For large values of $n$, one has}
$$
\mathcal{L}_{nW_\a}=\frac{24}{\pi}\log^2 n+2\pi I_n(\a)+\mathcal{O}(\log n\log\log n),
$$
{\it where the term $I_n(\a)$ for $\a\not\in \mathbb{Q}$ has the following behavior:}

1) $0<c\le \operatornamewithlimits{\overline{\lim}}\limits_{n\to\infty}\frac{I_n(\a)}{\log^2 n}\le C$;

2) {\it $\operatornamewithlimits{\underline{\lim}}\limits_{n\to\infty}\frac{I_n(\a)}{\log^2 n}=0$ if and only if $\a$ is a Liouville number;}

3) {\it if $|\a-\frac pq|\le \frac1{q^M}$ for some $M>2$ and infinitely many fractions $\frac pq$, $q\ge 2$, then the fraction $\frac{I_n(\a)}{\log^2 n}$ does not have a limit as $n\to \infty$;}

4) {\it there exist the numbers $\omega$ and $\Omega$ such that $0<\omega\le \Omega<\infty$ and for almost all $\a$}
$$
\omega=\operatornamewithlimits{\underline{\lim}}\limits_{n\to\infty}\frac{I_n(\a)}{\log^2 n}\quad\text{and}\quad
\Omega=\operatornamewithlimits{\overline{\lim}}\limits_{n\to\infty}\frac{I_n(\a)}{\log^2 n}.
$$

\medskip

Back to bilateral estimates of the polyhedral Lebesgue constants, in
\cite{YuYu} the constant in the upper logarithmic estimate was made proportional to the number of sides.
Recently, in \cite{KL} such an estimate was improved to be proportional to the number of triangulations.
This required delicate methods of number theory; in addition, unlike the preceding two-dimensional estimates,
those in \cite{KL} are established for arbitrary dimension.

There was a period when an impression of stagnation in the topic had happened to be.
For example, in \cite{SV} and \cite{DMV}, the well known estimates are reproved.
What has somehow got the topic off the ground were apparently \cite{DEKL} and~\cite{KL}.
All these is a background we started our study with.

\subsection{Goals and structure}

In this work, we deal with simplices from the first hyper-octant supported by the coordinate hyper-planes.
They are dilated anisotropically. We have obtained an asymptotic type formula by splitting our Lebesgue constants into two parts:
a kind of tame one and the rest, which can be ``good" or ``bad" depending on arithmetic properties of $W$.
The (achieved) goals of this work are:
\begin{enumerate}
\item Theorem B is generalized to the case of anisotropic dilations.

\item Kuznetsova's type result is improved and extended to arbitrary dimension.

\item Possibly problematic ``irrational" part is separated (like in Theorem C) and in certain cases is estimated in an appropriate way.

\end{enumerate}

To give a flavor of the obtained relations (Theorem \ref{mar}), we present their two-dimensional version so far:
{\it Let $\n=(n_1,n_2)\in \R^2$ be such that $n_2\ge n_1>3$. Then}
\begin{equation*}\label{asymd2}
\begin{split}
    \mathcal{L}_{\D(\n)}=\frac {16}\pi\log n_1\log n_2+\frac{8}{\pi}\log^2 n_1&+2\pi \int_{-\pi}^\pi
    \bigg|\sum_{0\le k\le n_1}\Big\{ n_2\Big(1-\frac{k_1}{n_1}\Big)\Big\} e^{ikx}\bigg|\,{\rm d}x \\
    &+ \mathcal{O}\(\log\log n_1 \log n_2\).
\end{split}
\end{equation*}
{\it Moreover, if $\frac{\log n_2}{\log n_1}\to \infty$ as $n_1\to \infty$, then}
\begin{equation*}\label{asymd2}
\begin{split}
    \mathcal{L}_{\D(\n)}=\frac {16}\pi\log n_1\log n_2+ \mathcal{O}\(\log^2 n_1+\log\log n_1 \log n_2 \).
\end{split}
\end{equation*}

It is worth noting that in the above asymptotic formula,  the irrational properties of the numbers $n_1$ and $n_2$ play no role,
unlike in the corresponding isotropic result studied by Nazarov and Podkorytov, cf. first of all 3) in Theorem C above.

One more remark is in order. The cases of $d$-dimensional rhombus or more general polyhedra, with the exception of the square,
are even more complicated. The authors hope that their approach is universal enough to crack these nuts as well; further results
will appear elsewhere.

Concentrated, our work is structured as follows. As mentioned, much attention and place is given to notation.
This is mainly done in the next section. With most of notation in hand, we are able to immediately formulate
in the same section our main results. The last section is devoted to auxiliary statements, their proofs and the proofs
of the main results. We mention that among auxiliary results, Lemma \ref{main} plays a pivotal role.

\section{Notations and main results}

As usual in the multidimensional case, a lot depends on convenient and effective notation.
Since there is much of it, we will introduce it by portions along with certain preliminary remarks.

\subsection{Basic notations and preliminary remarks}
%We will use the following notations:

%Let $\T^d \simeq (-\pi,\pi]^d$, $d=1,2,\dots$, be the $d$-dimensional torus.
In addition to the introduced, we will use the following notation:
%$\bm{x}^d=(x_1,\dots,x_d)\in \T^{d},$
$
\bm{k}^s=(k_1,\dots,k_s)\in \Z^{s},
$
$
\bm{\xi}^s=(\xi_1,\dots,\xi_s)\in \R^s,
$
and, as above, $(\bm{x}^s,\bm{k}^s)=k_1x_1+\dots+k_sx_s$, where $s\in \N$, $s\le d$, and $d\in \{2,3,\dots\}$. By $\mathbf{1}^s$, we understand the $s$-dimensional vector
$(1,1,\dots,1)$, and by $\{{\rm e}_j^s\}_{j=1}^s$ we denote the standard basis in $\R^s$.  Denoting
$
\bm{n}^s=(n_1,\dots,n_s)\in \R^s
$  and
$$
\m^{(s)}:=\(\frac{n_{s+1}}{n_1},\dots,\frac{n_{s+1}}{n_s}\)\in \R^s,
$$
we associate with them the vector function
$
\bm{\L}^s=(\L_1,\dots,\L_s)\,:\, \R^{s-1}\mapsto \R^{s},
$
where
$
\L_1=n_1
$
and
%for $s=2,\dots,d$
\begin{equation}\label{FORML}
  \L_s=\L_s(\bm{\xi}^{s-1}):=n_s-(\bm{m}^{(s-1)},\bm{\xi}^{s-1}),\quad s=2,3\dots.
\end{equation}
For simplicity, we shall frequently use the notation
$\n_2^s=(n_2,\dots,n_s)\in \R^{s-1}$, %$\x_2^s=(x_2,\dots,x_s)\in \T^{s-1}$,
$$
\n=\n^d,\quad \n_2=\n_2^d,\quad \n'=\n^{d-1},\quad  \n_2'=\n_2^{d-1},\quad \text{and} \quad \frac1{\bm{n}}=\(\frac1{n_1},\dots,\frac1{n_d}\);
$$
and similarly for the vectors $\x^d$, $\k^d$, $\bm{\xi}^d$, etc. More precisely, $\n'$ means that the last coordinate is omitted.
We would like to emphasize that in what follows we {\it do not assume the vectors $\n^s$, $s=1,\dots,d$, to be necessarily with integer coordinates}.
The floor and the fractional part functions are defined, as usual, by
$[x] =\max(m\in \Z\,:\, m\le x)$  \ {and}\  $\{x\}=x-[x],$ correspondingly.

Throughout the paper, we suppose that $\prod\limits_{{\bm k}\in \varnothing}=1$, $\sum\limits_{{\bm k}\in \varnothing}=0$, and $\sum\limits_{k=A}^B(\dots)=0$ if $A>B$.
We will also make use of the following auxiliary notation:
$$
\Pi(\n):=\prod_{j=1}^d \log n_j.
$$
Also, we use the notation $\, A \lesssim B$ for the
estimate $\, A \le c\, B$, where $A$ and $B$ are some nonnegative functions and $c$ is a positive constant independent of
the relevant variables in $\, A$ and $\, B.$ Such a constant $c$ may depend only on the dimension $d$.

\subsection{More notations and main results}

One of the main objects of this paper is the following Dirichlet type kernel related to the simplex $\D_{\bm{n}}$:
$$
D_{ \bm{n}}(\bm{x}):=\sum_{\bm{k}=0}^{[\bm{\L}^d]}e^{i(\bm{k},\,\bm{x})}=\sum_{k_1=0}^{[\L_1]}
\sum_{k_2=0}^{[\L_2(k_1)]}\dots \sum_{k_d=0}^{[\L_d(\bm{k}')]}e^{i(\bm{k},\,\bm{x})},
$$
where $\bm{\L}^d=(\L_1,\dots,\L_d)$ is given by~\eqref{FORML}. To match the notation, we note that $D_{ \bm{n}}=\mathcal{D}_{\D(\n)}$.
There is also a need in the following notation:
$$
F_{\bm{n}}(\bm{x}'):=\sum_{\bm{k}'=0}^{[\bm{\L}^{d-1}]}\{\L_d(\bm{k}')\}e^{i(\bm{k}',\,\bm{x}')};
$$
by $\d_{h,\bm{\eta}^{s}}$, where $h\in \R$ and $\bm{\xi}^s\in \R^s$, we denote the operator acting on a function $f(\x^s)$ by the rule
$$
\d_{h,\bm{\xi}^{s}} f(\x^s):=e^{ih}f(\x^s-h \bm{\xi}^{s})-f(\x^s);
$$
further, we denote
\begin{equation}\label{e2}
  \begin{split}
    S_{\bm{n}}(\bm{x})&:=\sum_{\bm{k}'=0}^{[\bm{\L}^{d-1}]}e^{i(\bm{k}',\,\bm{x}')}\frac{e^{i\L_d(\bm{k}')x_d}-1}{ix_d}=\frac1{ix_d}\d_{n_d x_d,\frac1{\n'}}D_{\bm{n}'}(\bm{x}'),\\
   % &=\frac{1}{ix_d}\(e^{in_dx_d}D_{\bm{n}'}(\bm{x}'-x_d\bm{m}^{(d-1)})-D_{\bm{n}'}(\bm{x}')\),
  \end{split}
\end{equation}
%and
\begin{equation}\label{eR}
\begin{split}
       R_{\bm{n}}(\bm{x})&:=\frac{1}{2}\d_{n_d x_d,\frac1{\n'}}D_{\bm{n}'}(\bm{x}')+\frac{x_d}{2\pi i}\sum_{\nu\neq 0}\frac{1}{\nu(2\pi\nu+x_d)}\d_{n_d(2\pi\nu+x_d),\frac1{\n'}}D_{\bm{n}'}(\bm{x}').
\end{split}
\end{equation}

To formulate and prove the main result, in addition to the above notation
the following one will be helpful. Let $\bm{\eta}$ be a $d$-dimensional vector, with the entries either $0$
or $1$ only. We denote $|\bm{\eta}|=\eta_1+\dots+\eta_d$. Obviously, $0\le|\bm{\eta}|\le d$.

\begin{theorem}\label{mar} Let $\n=(n_1,\dots,n_d)\in \R^d$ and $3<n_1\le n_2\le\dots\le n_d$. We have the asymptotic relation
\begin{equation}\label{mainfor}
\begin{split}
    \mathcal{L}_{\D(\n)}=\Vert D_{\n}\Vert_{(d)}&=\frac{2^{d+1}}{\pi}\bigg(1+\sum_{j=1}^d\frac{\log n_1}{\log n_j}\bigg)\prod_{i=1}^d \log n_i\\
    &+\sum\limits_{k=2}^d \mathfrak{F}_{\n^k}^k\sum\limits_{\bm{\eta}: |\bm{\eta}|=d-k}\,\prod\limits_{i: \eta_i=1}\log\frac{n_{d-i+1}}{n_1}\\
    &+\mathcal{O}\({\log\log n_1}\prod_{j=2}^d \log n_j \),
\end{split}
\end{equation}
where
\begin{equation}\label{fr1}
\begin{split}
     \mathfrak{F}^k_{\n^k}:=&2\pi\sum_{l=0}^{k-2}\Bigg(\Vert F_{n_1 \mathbf{1}^l,\n^{k-l}}\Vert_{(k-1)}-\Vert F_{n_1 \mathbf{1}^l,\n^{k-l-1},n_1}\Vert_{(k-1)}\\
    &+\sum_{1\le |\mu|\le [\frac{n_{k-l}}{n_1}]}\frac1{|\mu|}\int_{-\pi}^{\pi}\(\Vert \d_{n_1(t+2\pi \mu),\frac1{\widetilde{\n}^{(l)}}}
    F_{\widetilde{\n}^{(l)},n_1} \Vert_{(k-2)}-2\Vert F_{\widetilde{\n}^{(l)},n_1}\Vert_{(k-2)}\){\rm d}t\Bigg)
%    \\
%    &+\sum_{1\le |\mu|\le [\frac{n_{k-l}}{n_1}]}\frac1{|\mu|}\int_{-\pi}^{\pi}\(
%e^{in_1(t+2\pi \mu)}F_{\widetilde{\n}^{(l)},n_1}(\cdot-n_1(t+2\pi \mu)\frac1{\widetilde{\n}^{(l)}})-F_{\widetilde{\n}^{(l)},n_1}(\cdot)
%\Vert_{(k-2)}-2\Vert F_{\widetilde{\n}^{(l)},n_1}\Vert_{(k-2)}\){\rm d}t\Bigg),
\end{split}
\end{equation}
and
$$
\widetilde{\n}^{(l)}:=(n_1 \bm{1}^l, \n_2^{({k-l-1})})\in \R^{k-2}.
$$
\end{theorem}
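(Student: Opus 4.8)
The plan is to argue by induction on the dimension $d$, at each step integrating out the last variable $x_d$ so as to reduce $\Vert D_{\n}\Vert_{(d)}$ to $(d-1)$-dimensional quantities. Writing the kernel as a sum over $\k'$ of the geometric sum $\sum_{k_d=0}^{[\L_d(\k')]}e^{ik_d x_d}=\frac{e^{i([\L_d(\k')]+1)x_d}-1}{e^{ix_d}-1}$ in the last variable, the starting point is the pointwise identity provided by Lemma \ref{main}. It rests on the Mittag--Leffler expansion $\frac{1}{e^{ix_d}-1}=\frac{1}{ix_d}-\frac12+\frac{x_d}{2\pi i}\sum_{\nu\neq 0}\frac{1}{\nu(2\pi\nu+x_d)}$, which splits $D_{\n}$ into the singular main part $S_{\n}=\frac{1}{ix_d}\d_{n_d x_d,\frac1{\n'}}D_{\n'}$, the remainder $R_{\n}$ of \eqref{eR}, and the correction produced by replacing the integer truncation $[\L_d]+1$ by the continuous value $\L_d$; the last correction is exactly what introduces the fractional-part kernel $F_{\n}$, since its coefficients are the $\{\L_d(\k')\}$.

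With this decomposition in hand I would first isolate the dominant contribution, which comes from $S_{\n}$. Because of the factor $\frac{1}{ix_d}$, integrating $|S_{\n}|$ in $x_d$ reduces, up to lower-order terms, to the one-dimensional computation behind the classical estimate quoted in the introduction, now performed fibrewise on the shifted kernel $\d_{n_d x_d,\frac1{\n'}}D_{\n'}$; the relevant range $|x_d|\gtrsim 1/n_d$ yields one factor growing like $\log n_d$. After the $x_d$-integration one is left with a $(d-1)$-dimensional integral of $D_{\n'}$-type, to which the induction hypothesis applies. Carrying the constant through the iteration reproduces the factor $\frac{2^{d+1}}{\pi}$, while the coupling of scales by the shift $\frac1{\n'}$ (constrained by $n_1\le\dots\le n_d$) produces the anisotropic weight $1+\sum_{j=1}^d\frac{\log n_1}{\log n_j}$; the natural consistency check is the isotropic specialisation $\n=(n,\dots,n)$, where this weight equals $d+1$ and one recovers Theorem B.

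The fractional-part terms are collected next. The discrepancy between $[\L_d]+1$ and $\L_d$, together with the $-\frac12$ and $\nu$-sum terms gathered in $R_{\n}$, produces after integration exactly the shifted integrals $\int\d_{n_1(t+2\pi\mu),\frac1{\widetilde{\n}^{(l)}}}F_{\widetilde{\n}^{(l)},n_1}$ of \eqref{fr1}. Summing these over the level $k$ at which a fractional-part factor is generated, and over the remaining $d-k$ directions via the indicator vectors $\bm{\eta}$ with $|\bm{\eta}|=d-k$, assembles the middle term $\sum_{k=2}^d\mathfrak{F}^k_{\n^k}\sum_{\bm{\eta}:|\bm{\eta}|=d-k}\prod_{i:\eta_i=1}\log\frac{n_{d-i+1}}{n_1}$. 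The bookkeeping is that at each of the $d-1$ reduction steps one either extracts a clean logarithmic factor $\log\frac{n_{d-i+1}}{n_1}$ or an $\mathfrak{F}$-type fractional-part factor, and the double sum records all admissible choices.

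The hardest part will be the error analysis. One must show that the infinite $\nu$-sum in $R_{\n}$, after multiplication by the uniformly summable weight $\frac{1}{\nu(2\pi\nu+x_d)}=O(\nu^{-2})$ and integration, contributes only to the $\mathfrak{F}^k$ and to the error, and that the error accumulated over all $d-1$ levels stays within $\mathcal{O}(\log\log n_1\prod_{j=2}^d\log n_j)$. Producing the single $\log\log n_1$ rather than a full $\log n_1$ is the delicate point: it should come from a dyadic decomposition of the $x_d$-integral near $x_d=0$ combined with uniform $L_1$-bounds on the fractional-part kernels $F$ of the type underlying Theorems A and C. I expect this estimate to be the technical core, the extraction of the main term and the combinatorial assembly being comparatively routine once Lemma \ref{main} is available.
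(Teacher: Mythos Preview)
Your decomposition of $D_{\n}$ is the right starting point (it is Lemma~\ref{leE}, not Lemma~\ref{main}; the latter is not a pointwise identity but already the recursive asymptotic for the norm), yet the reduction step has a genuine gap. You assert that integrating $|S_{\n}|$ in $x_d$ ``yields one factor growing like $\log n_d$'' and leaves ``a $(d-1)$-dimensional integral of $D_{\n'}$-type''. Test this in the isotropic case $\n=n\mathbf{1}^d$, $n\in\N$: there $F_{\n}\equiv 0$ and $R_{\n}$ is lower order, so your scheme forces $\Vert D_{n\mathbf{1}^d}\Vert_{(d)}\sim c\log n\cdot\Vert D_{n\mathbf{1}^{d-1}}\Vert_{(d-1)}$ with a fixed $c$, hence a leading constant obeying $C_d=c\,C_{d-1}$; but Theorem~B gives $C_d=\frac{2^{d+1}(d+1)}{\pi}$, and no geometric recursion reproduces the factor $d+1$. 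The underlying reason is that $\Vert\d_{n_d x_d,1/\n'}D_{\n'}\Vert_{(d-1)}$ is \emph{not} close to $2\Vert D_{\n'}\Vert_{(d-1)}$ for most $x_d\in\T$; the excess is of leading order. The paper instead rescales $x_d\mapsto\frac{n_1}{n_d}x_d$, splitting $\Vert S_{\n}\Vert_{(d)}=\Vert S_{\n',n_1}\Vert_{(d)}+J_2$ with $J_2$ taken over $\pi\le|x_d|\le\frac{n_d}{n_1}\pi$. The first summand stays $d$-dimensional and is turned back into $\Vert D_{n_1,\n'}\Vert_{(d)}$ via Lemma~\ref{lemAs1}; only $J_2$ drops a dimension, and on that outer range the analogue of Lemma~\ref{ld2} does give the ``two independent copies'' heuristic, producing $2\log\frac{n_d}{n_1}\cdot\Vert D_{\n'}\Vert_{(d-1)}$. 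Iterating normalises every scale to $n_1$ and terminates at $\Vert D_{n_1\mathbf{1}^d}\Vert_{(d)}$, which Theorem~B handles; the anisotropic weight $1+\sum_j\frac{\log n_1}{\log n_j}$ is produced by this mechanism, not by the vague ``coupling of scales'' you invoke.

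Your identification of the source of the $\mathfrak{F}^k$ terms is also off. They do not come from $R_{\n}$: by~\eqref{e20} one has $\Vert R_{\n}\Vert_{(d)}=O(\Pi(\n'))$, which is absorbed entirely in the remainder. The paired norms $\Vert F_{n_1\mathbf{1}^l,\n^{k-l}}\Vert-\Vert F_{n_1\mathbf{1}^l,\n^{k-l-1},n_1}\Vert$ in~\eqref{fr1} arise from applying Lemma~\ref{lemAs1} before and after each scale normalisation, while the $\mu$-sum appears because inside $J_2$ one must apply the decomposition~\eqref{e19} a \emph{second} time, now to the $D_{\n'}$ sitting in $\d_{n_1(x_d+2\pi k),1/\n'}D_{\n'}$; the $F_{\n_2',n_1}$ piece of that nested decomposition is what generates the shifted $F$-integrals in~\eqref{fr1}.
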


\begin{remark}
The formula for $\mathfrak{F}^k_{\n^k}$ looks quite complicated. However, in certain standard and practical cases it can successfully be
estimated (see, in particular, Corollaries~\ref{kuztype} and~\ref{kuztype+} below). Generally, we do not know asymptotic properties of this
quantity. For instance, even in the simpler case of Theorem C the final answer is still unknown; say, in 4) the intriguing problem is whether
the numbers $\omega$ and $\Omega$ are equal or not.
\end{remark}

Allowing more sophisticated relation between directions than in \cite{Daug} and \cite{Ku}, we arrive at

\begin{corollary}\label{kuztype}
Let $\n=(n_1,\dots,n_d)\in \R^d$, $3<n_1\le n_2\le\dots\le n_d$, and  $n_d=\l_j n_j+p_j$, $p_j< n_j$, $j=1,\dots,d-1$, $\l_j,p_j\in \Z_+$. Then
\begin{equation}\label{mainfor1}
  \begin{split}
          \mathcal{L}_{\D(\n)}=&\frac{2^{d+1}}{\pi}\bigg(1+\sum_{j=1}^d\frac{\log n_1}{\log n_j}\bigg)\prod_{i=1}^d \log n_i+\mathcal{O}\({\log\log n_1}\prod_{k=2}^d \log n_k \),
   \end{split}
\end{equation}
where the constant in $\mathcal{O}$ depends only on $d$ and $p_j$ but not on $\l_j$.
\end{corollary}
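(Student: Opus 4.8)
The plan is to deduce the statement directly from Theorem \ref{mar}. Since the leading term and the error term in the asserted formula \eqref{mainfor1} already coincide with those in \eqref{mainfor}, it suffices to show that, under the arithmetic hypothesis, the ``irrational'' middle sum
\[
\sum_{k=2}^d \mathfrak{F}^k_{\n^k}\sum_{\bm{\eta}:|\bm{\eta}|=d-k}\prod_{i:\eta_i=1}\log\frac{n_{d-i+1}}{n_1}
\]
is absorbed into $\mathcal{O}(\log\log n_1\prod_{j=2}^d\log n_j)$, with a constant depending only on $d$ and the remainders $p_j$.

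First I would carry out a purely bookkeeping reduction. The quantity $\mathfrak{F}^k_{\n^k}$ depends only on $\n^k=(n_1,\dots,n_k)$, whereas each $\bm{\eta}$-product with $|\bm{\eta}|=d-k$ does not exceed $\prod_{i=k+1}^d\log(n_i/n_1)\le\prod_{i=k+1}^d\log n_i$; indeed the factor corresponding to $i=d$ equals $\log(n_1/n_1)=0$, so only the indices larger than $k$ survive in the maximal term. Hence the whole claim follows once we prove, for each $k=2,\dots,d$, the estimate
\[
\mathfrak{F}^k_{\n^k}=\mathcal{O}\Big(\log\log n_1\prod_{j=2}^k\log n_j\Big),
\]
with an implied constant depending only on $d$ and $p_j$: multiplying by the maximal $\bm{\eta}$-product gives $\mathcal{O}(\log\log n_1\prod_{j=2}^d\log n_j)$ for every $k$, and summing over $k$ and over the finitely many $\bm{\eta}$ (a number bounded in terms of $d$) preserves this order.

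The core of the argument is this per-$k$ estimate, and here I would use the arithmetic rigidity forced by the hypothesis. Writing $n_d/n_j=\l_j+p_j/n_j$ for $j\le d-1$, one gets for every pair $i,j\le d-1$ the exact relation $\l_i n_i-\l_j n_j=p_j-p_i$, so that all ratios $n_s/n_r$ occurring in the kernels $F$ in \eqref{fr1} are rational up to a correction of size $\mathcal{O}(1/n_1)$ governed by the bounded $p_j$. Consequently each fractional-part coefficient $\{\L_s(\bm{k}')\}$ reduces, modulo integers --- which is exactly why $\l_j$ does not enter the final constant --- to a sawtooth: for the top index ($s=d$) it is governed by the small slopes $p_j/n_j$, just as in the rational case behind Theorem B, while for the intermediate indices it is a genuinely rational sawtooth of controlled period. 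I would then bound the $L_1$-norms of the associated fractional-part Dirichlet kernels together with the $\mu$-summation and $t$-integration corrections in \eqref{fr1}, whose built-in differences of norms supply the decisive cancellations, by $\prod_{j=2}^k\log n_j$ times a constant depending only on $d$ and $p_j$, using one-dimensional sawtooth $L_1$-estimates and the pivotal Lemma \ref{main}.

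The hard part is precisely this last estimate, carried out uniformly in the unbounded multipliers $\l_j$. In contrast with the isotropic situation, the intermediate slopes are genuinely rational rather than near-integer, so the number of relevant frequencies grows with the denominators; the delicate issue is to verify that the resulting contribution is nevertheless only $\mathcal{O}(\log\log n_1\prod_{j=2}^k\log n_j)$, that is, that the $\log\log n_1$-saving survives the interaction between the variation of the sawtooth coefficients and the summation over the discrete simplex. This is exactly the point at which the number-theoretic input encapsulated in Lemma \ref{main} is indispensable, and it constitutes the main obstacle of the proof.
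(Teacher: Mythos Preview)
Your reduction via Theorem~\ref{mar} is sound, and the bookkeeping bound on the $\bm{\eta}$-products is correct (though the parenthetical about $i=d$ is a non sequitur: the maximum is attained at $\eta_1=\dots=\eta_{d-k}=1$ simply because $n_d\ge\dots\ge n_1$). The genuine gap is that you never carry out the core estimate on the $\mathfrak{F}$-terms, and the tool you cite for it---Lemma~\ref{main}---is the wrong one. That lemma is the structural decomposition that \emph{produces} Theorem~\ref{mar}; the quantities $\mathfrak{F}^k_{\n^k}$ appear in its \emph{conclusion}, so invoking it again to bound them is circular. There is no ``number-theoretic input'' to be extracted from it, and your final paragraph is essentially a restatement of the difficulty rather than a resolution of it.

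The paper's argument is shorter and purely analytic; it focuses on $\|F_{\n}\|_{(d-1)}$. Under the hypothesis $n_d=\lambda_j n_j+p_j$ one has $n_d/n_j=\lambda_j+p_j/n_j$ with $\lambda_j\in\Z$, so $\{\Lambda_d(\k')\}$ collapses to $\bigl\{\sum_{j=1}^{d-1} p_j k_j/n_j\bigr\}$: the $\lambda_j$ vanish at once, and only the \emph{bounded} remainders $p_j$ survive. Writing this as $\sum_j(p_jk_j/n_j-l_j)$ with $l_j\in\{0,\dots,p_j-1\}$, one bounds the $j$-th piece of $F_{\n}$ in two strokes. The term $\frac{p_j}{n_j}\sum_{\k'} k_j\, e^{i(\k',\x')}=\frac{p_j}{in_j}\frac{\partial}{\partial x_j}D_{\n'}(\x')$ is controlled by Bernstein's inequality~\eqref{sp2} together with~\eqref{d1}, giving $p_j\,\Pi(\n')$; the integer correction $l_j$ is handled by slicing the $k_j$-range into $p_j$ blocks and costs at most $p_j^2\,\Pi(\n')/\log n_j$. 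Both land well inside the remainder of~\eqref{mainfor}. Your anxieties about ``intermediate slopes with unbounded rational denominators'' (the ratios $n_i/n_j$ for $i,j<d$) and about the $\mu$-summation in~\eqref{fr1} do not arise along this route, because the decisive object is $\{\Lambda_d\}$, which involves only the near-integer ratios $n_d/n_j$. In short: drop the appeal to Lemma~\ref{main} and use the Bernstein-inequality trick of Lemma~\ref{lsp1} instead.
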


Combining Theorem~\ref{mar} and Remark~\ref{lenM} below, we readily obtain the following corollary.
\begin{corollary}\label{kuztype+}
Let $\n=(n_1,\dots,n_d)\in \R^d$, $3<n_1\le n_2\le\dots\le n_d$, and $\lim\limits_{n_{d-1}\to\infty} \frac{\log n_d}{\log n_{d-1}}=\infty$. Then
\begin{equation*}
%\label{mainfor1+}
  \begin{split}
    \mathcal{L}_{\D(\n)}&=\frac{2^{d+1}}{\pi}\bigg(1+\sum_{j=1}^{d-1}\frac{\log n_1}{\log n_j}\bigg)\prod_{i=1}^d\log n_i
    +\mathcal{O}\(\Big(\frac{\log n_{d-1}}{\log n_d}+\frac{\log\log n_1}{\log n_1}\Big)\prod_{k=1}^d \log n_k \).
   \end{split}
\end{equation*}
\end{corollary}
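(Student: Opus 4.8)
The plan is to read the corollary straight off the asymptotic relation \eqref{mainfor} of Theorem \ref{mar}, reorganizing its right-hand side into the main term asserted here plus quantities that the growth hypothesis $\lim_{n_{d-1}\to\infty}\frac{\log n_d}{\log n_{d-1}}=\infty$ pushes into the error. First I would dispose of the discrepancy between the two main terms. The bracket in \eqref{mainfor} runs over $j=1,\dots,d$, whereas the corollary keeps only $j=1,\dots,d-1$; the missing summand is the $j=d$ contribution $\frac{2^{d+1}}{\pi}\frac{\log n_1}{\log n_d}\prod_{i=1}^d\log n_i$. Since $3<n_1\le n_{d-1}$ gives $\log n_1\le\log n_{d-1}$, this is $\lesssim\frac{\log n_{d-1}}{\log n_d}\prod_{k=1}^d\log n_k$, which is exactly the first half of the claimed error. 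In the same spirit, the error term $\mathcal{O}\(\log\log n_1\prod_{j=2}^d\log n_j\)$ already present in \eqref{mainfor} can be rewritten as $\mathcal{O}\(\frac{\log\log n_1}{\log n_1}\prod_{k=1}^d\log n_k\)$, the second half of the claimed error. Thus both halves of the displayed $\mathcal{O}$-term are accounted for, and it remains only to absorb the double sum over $k$ and $\bm{\eta}$.

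The heart of the matter is to show that $\sum_{k=2}^d\mathfrak{F}^k_{\n^k}\sum_{\bm{\eta}:|\bm{\eta}|=d-k}\prod_{i:\eta_i=1}\log\frac{n_{d-i+1}}{n_1}$ is itself $\lesssim\(\frac{\log n_{d-1}}{\log n_d}+\frac{\log\log n_1}{\log n_1}\)\prod_{k=1}^d\log n_k$. This is precisely where Remark \ref{lenM} enters, supplying the size estimates for the quantities $\mathfrak{F}^k_{\n^k}$ from \eqref{fr1}, which I would substitute term by term. The key structural observation is that $\mathfrak{F}^k_{\n^k}$ depends only on the coordinates $n_1,\dots,n_k$, so for $k\le d-1$ it is independent of the rapidly growing $n_d$, while the accompanying elementary-symmetric coefficient is a sum of products of the logarithms $\log\frac{n_{d-i+1}}{n_1}$. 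I would therefore sort the terms according to whether the large factor $\log\frac{n_d}{n_1}$ is present, and use the hypothesis $\frac{\log n_d}{\log n_{d-1}}\to\infty$, which forces $\log n_d$ to dominate every other $\log n_j$: after inserting the bounds of Remark \ref{lenM}, each term is then matched against one of the two error pieces (the factor $\log n_d$ being reproduced inside $\frac{\log\log n_1}{\log n_1}\prod_k\log n_k$), while the top summand $\mathfrak{F}^d_{\n^d}$, whose coefficient is the empty product $1$, is controlled directly by Remark \ref{lenM} under the growth hypothesis. Collecting the three contributions then yields the stated formula.

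The main obstacle is exactly this last estimate of the $\mathfrak{F}$-sum. The delicate point is that the coefficients multiplying $\mathfrak{F}^k_{\n^k}$ carry the large logarithms $\log(n_{d-i+1}/n_1)$, so a crude uniform bound on $\mathfrak{F}^k_{\n^k}$ by itself cannot be expected to suffice; one has to play the quantitative estimates of Remark \ref{lenM} against the fact that $\log n_d$ dwarfs $\log n_{d-1}\ge\dots\ge\log n_1$, and verify that the arithmetically sensitive lower-order terms $\mathfrak{F}^k_{\n^k}$ with $k<d$ really do remain negligible once weighted by their symmetric-function coefficients. I expect the bookkeeping of which products of log-ratios attach to which $\mathfrak{F}^k_{\n^k}$, and the corresponding case distinction driven by the growth hypothesis, to be the only nontrivial part; everything else is the routine rewriting described above.
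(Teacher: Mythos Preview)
Your proposal is correct and takes essentially the same approach as the paper. The paper's own proof is the single sentence ``Combining Theorem~\ref{mar} and Remark~\ref{lenM} below, we readily obtain the following corollary,'' and your plan is precisely this: start from \eqref{mainfor}, peel off the $j=d$ summand of the main term and the original $\mathcal{O}(\log\log n_1\,\Pi(\n_2))$ error into the two displayed error pieces, and then absorb the double sum $\sum_{k}\mathfrak{F}^k_{\n^k}(\dots)$ using the sharper bound $\|F_{\bm m}\|_{(d-1)}\lesssim \log N\cdot\Pi(\bm m')$ of Remark~\ref{lenM} together with \eqref{nM}. Your observation that $\mathfrak{F}^k_{\n^k}$ for $k\le d-1$ involves only $n_1,\dots,n_{k}$ and that only the index $i=1$ in the coefficient produces the large factor $\log(n_d/n_1)$ is the right organizing principle; the only genuinely laborious step, as you note, is the term-by-term bookkeeping on the pieces of $\mathfrak{F}^k_{\n^k}$ in \eqref{fr1}, which the paper likewise leaves implicit.
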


\begin{remark}
To make our main result more transparent, we add, in addition to the two-dimensional version in the introduction,
the three-dimensional one, that is, for $d=3$, in notations and assumptions of Theorem~\ref{mar}:

\begin{equation*}
  \begin{split}
       \mathcal{L}_{\D(\n^3)}&=\frac{2^4}{\pi}\(2\log n_1 \log_2 \log n_3+\log n_2 \log^2 n_1+\log n_3 \log^2 n_1\)\\
       &+2\pi \(\Vert F_{n_1,n_2,n_3}\Vert_{(2)}-\Vert F_{n_1,n_2,n_1}\Vert_{(2)}+\Vert F_{n_1,n_1,n_2}\Vert_{(2)}\)\\
       &+2\pi \Vert F_{n_1,n_2} \Vert_{(1)}\(\log\frac{n_2}{n_1}+\log\frac{n_3}{n_1}\)\\
       &+2\pi \sum_{1\le |\mu|\le [\frac{n_3}{n_1}]}\frac1{|\mu|}\int_{-\pi}^\pi \(\Vert \d_{n_1(t+2\pi \mu),\frac1{n_2}}
    F_{n_2,n_1} \Vert_{(1)}-2\Vert F_{n_2,n_1}\Vert_{(1)}\){\rm d}t\\
       &+\mathcal{O}\(\log\log n_1 \log n_2\log n_3\).
   \end{split}
\end{equation*}
\end{remark}

\section{Auxiliary results and proofs}

The proof of Theorem \ref{mar} itself is supposed to be rather short. The burden of
all complicated technicalities will fall on a series of auxiliary lemmas.

\begin{lemma}\label{leE}
The following equality holds
\begin{equation}\label{e19}
  \begin{split}
D_{\bm{n}}(\bm{x})=S_{\bm{n}}(\bm{x})-e^{in_dx_d}F_{ \bm{n}}(\bm{x}'-x_d \bm{m}^{(d-1)})+R_{\bm{n}}(\bm{x}).
  \end{split}
\end{equation}
\end{lemma}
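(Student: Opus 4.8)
The plan is to reduce the $d$-dimensional identity to a one-dimensional summation formula applied to the innermost sum. First I would factor off the last variable,
$$
D_{\bm{n}}(\bm{x})=\sum_{\bm{k}'=0}^{[\bm{\L}^{d-1}]}e^{i(\bm{k}',\bm{x}')}\sum_{k_d=0}^{[\L_d(\bm{k}')]}e^{ik_dx_d},
$$
so that everything hinges on evaluating the geometric sum $\sum_{k_d=0}^{[\lambda]}e^{ik_dx_d}$ for the fixed real number $\lambda=\L_d(\bm{k}')>0$. Since $\lambda$ need not be an integer, I would write this sum as $\sum_{k_d\in\Z}\mathbf{1}_{[0,\lambda)}(k_d)e^{ik_dx_d}$ and apply the Poisson summation formula to $g(t)=\mathbf{1}_{[0,\lambda)}(t)e^{itx_d}$, whose relevant Fourier values are $\widehat{g}(2\pi\nu)=\bigl(e^{i(x_d-2\pi\nu)\lambda}-1\bigr)/\bigl(i(x_d-2\pi\nu)\bigr)$.

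This produces the desired one-dimensional identity in three pieces. The $\nu=0$ term is exactly the ``continuous'' expression $\bigl(e^{i\lambda x_d}-1\bigr)/(ix_d)$, which upon summation over $\bm{k}'$ reassembles into $S_{\bm{n}}$. To treat the tail $\sum_{\nu\ne0}$, I would use the partial-fraction split $\frac1{2\pi\nu+x_d}=\frac1{2\pi\nu}-\frac{x_d}{2\pi\nu(2\pi\nu+x_d)}$: the first, slowly decaying part reduces (via $e^{2\pi i\nu\lambda}=e^{2\pi i\nu\{\lambda\}}$ together with the classical Fourier expansion of the sawtooth, $\sum_{\nu\ne0}e^{2\pi i\nu t}/(2\pi i\nu)=\tfrac12-\{t\}$) to a multiple of $\{\lambda\}e^{i\lambda x_d}$, while the remaining part is absolutely convergent and furnishes the oscillatory remainder of the type collected in $R_{\bm{n}}$.

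The reassembly step relies on the algebraic identity $(\bm{m}^{(d-1)},\bm{k}')=n_d-\L_d(\bm{k}')$, equivalently $(\bm{k}',\tfrac1{\bm{n}'})=1-\L_d(\bm{k}')/n_d$, which is immediate from \eqref{FORML}. Using it, the phase $e^{i\L_d(\bm{k}')x_d}$ attached to each Fourier coefficient is precisely what converts the translated Dirichlet kernels $D_{\bm{n}'}(\bm{x}'-h\tfrac1{\bm{n}'})$ into the difference operators $\d_{h,1/\bm{n}'}$ occurring in $S_{\bm{n}}$ and $R_{\bm{n}}$; the same computation identifies the fractional-part term with $e^{in_dx_d}F_{\bm{n}}(\bm{x}'-x_d\bm{m}^{(d-1)})$, once one observes that multiplying $\{\L_d(\bm{k}')\}e^{i\L_d(\bm{k}')x_d}$ by $e^{i(\bm{k}',\bm{x}')}$ and invoking the same identity reproduces exactly the shifted argument $\bm{x}'-x_d\bm{m}^{(d-1)}$. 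Summing the three groups over $\bm{k}'$ then yields \eqref{e19}.

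The main obstacle is the rigorous justification of the Poisson step: because $g$ has jump discontinuities, its transform decays only like $1/\nu$, so the $\nu$-series converges merely conditionally and at the integer endpoint $t=0$ one must use the averaged value, whence the symmetric-limit interpretation of the sum and careful bookkeeping of the resulting boundary constant. A related technical point is legitimizing the interchange of the conditionally convergent $\nu$-sum with the finite sum over $\bm{k}'$ and the subsequent integrations; here the absolute convergence of the remainder series produced by the partial-fraction split is what makes the rearrangement safe, the slowly convergent part having already been summed in closed form through the sawtooth expansion.
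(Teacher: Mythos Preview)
Your argument is correct and yields \eqref{e19}, but it proceeds along a route somewhat different from the paper's. The paper does not invoke Poisson summation at all: it writes the inner sum as the Riemann--Stieltjes integral $\int_0^{\L_d(\bm{k}')}e^{i\xi x_d}\,{\rm d}[\xi]$, splits ${\rm d}[\xi]={\rm d}\xi-{\rm d}\{\xi\}$ (the first piece gives $S_{\bm{n}}$ immediately), integrates the second piece by parts to produce the boundary term $e^{i\L_d x_d}\{\L_d\}$ (which becomes the $F_{\bm{n}}$-term) plus $ix_d\int_0^{\L_d}\{\xi\}e^{i\xi x_d}\,{\rm d}\xi$, and only then inserts the Fourier expansion $\{\xi\}=\tfrac12+\sum_{\nu\ne0}(2\pi i\nu)^{-1}e^{2\pi i\nu\xi}$ and integrates term by term. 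The advantage of the paper's order of operations is that the extra factor $ix_d$ coming from integration by parts makes the resulting $\nu$-series absolutely convergent from the outset, so the ``main obstacle'' you flag---the conditional convergence of the Poisson series and the endpoint averaging at $t=0$---simply never arises. Your approach is the Fourier-dual of this: you pass to the transform side first and then repair the slow decay via the partial-fraction split $\frac{1}{2\pi\nu+x_d}=\frac{1}{2\pi\nu}-\frac{x_d}{2\pi\nu(2\pi\nu+x_d)}$, summing the dangerous $\frac{1}{2\pi\nu}$-piece in closed form through the sawtooth identity. Both routes are short and both hinge on the same Fourier series of $\{\cdot\}$; the paper's is marginally more elementary, while yours makes the Poisson-summation structure explicit.
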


\begin{proof}
  We have
  \begin{equation}\label{e1}
    \begin{split}
    D_{ \n}(\x)&=\sum_{\k'=0}^{[\bm{\L}^{d-1}]}e^{i(\k',\,\x')}
     \int_0^{\L_d(\k')}e^{i\xi x_d} \, {\rm d}[\xi]\\
     &=\sum_{\k'=0}^{[\bm{\L}^{d-1}]}e^{i(\k',\,\x')}
     \int_0^{\L_d(\k')}e^{i\xi x_d} {\rm d} \xi-\sum_{\k'=0}^{[\bm{\L}^{d-1}]}e^{i(\k',\,\x')}
     \int_0^{\L_d(\k')}e^{i\xi x_d}\,  {\rm d}\{\xi\}\\
     &:=S_{ \n}(\x)-I_{ \n}(\x).
  \end{split}
  \end{equation}

Integrating by parts in $I_{ \n}(\x)$, we get
\begin{equation}\label{e16}
  \int_0^{\L_d(\k')}e^{i\xi x_d} \, {\rm d}\{\xi\}=e^{i\L_d(\k')x_d}\{\L_d(\k')\}-ix_d\int_0^{\L_d(\k')}\{\xi\}e^{i\xi x_d} \, {\rm d}\xi.
\end{equation}
Representing $\{\cdot\}$ via the Fourier series as
$$
\{\xi\}=\frac12+\sum_{\nu\neq 0}\frac1{2\pi i \nu}e^{2\pi i\nu \xi},
$$
we obtain
\begin{equation}\label{e17}
  \begin{split}
     &ix_d\int_0^{\L_d(\k')}\{\xi\}e^{i\xi x_d} \, {\rm d}\xi=ix_d\int_0^{\L_d(\bm{k}^{d-1})}\(\frac12+\sum_{\nu\neq 0}\frac1{2\pi i \nu}e^{2\pi i\nu \xi}\)e^{i\xi x_d} \, {\rm d}\xi\\
     &=\frac{e^{\L_d(\k')x_d}-1}{2}+\frac{x_d}{2\pi i}\sum_{\nu\neq 0}\frac{1}{\nu(2\pi\nu+x_d)}\(e^{\L_d(\k')(2\pi\nu+x_d)}-1\).
  \end{split}
\end{equation}
Combining~\eqref{e16} and~\eqref{e17}, we get
\begin{equation*}
%\label{e18}
  \begin{split}
    I_{\n}(\x)&=e^{in_dx_d}\sum_{\k'=0}^{[\bm{\L}^{d-1}]}\{\L_d(\k')\}e^{i(\k',\,\x'-\bm{m}^{(d-1)}x_d)}\\
    &-\frac{1}{2}\d_{n_d x_d,\frac1{\n'}}D_{\bm{n}'}(\bm{x}')-\frac{x_d}{2\pi i}\sum_{\nu\neq 0}\frac{1}{\nu(2\pi\nu+x_d)}\d_{n_d(2\pi\nu+x_d),\frac1{\n'}}D_{\bm{n}'}(\bm{x}') \\
    &=e^{in_dx_d}F_{ \n}(\x'-\bm{m}^{(d-1)}x_d)+R_{\n}(\x).
  \end{split}
\end{equation*}
This and~\eqref{e1} yield~\eqref{e19}.
\end{proof}

\begin{lemma}\label{thM} %{\sc (See~\cite{KoLo})}
Let $\n \in (3,+\infty)^d.$ %$n_l> 3$, $l=1,\dots,d$.
Then
\begin{equation}\label{d1}
  \left\Vert D_{\bm{n}}\right\Vert_{(d)}\lesssim \Pi(\n),%\prod_{l=1}^d\log n_l,
\end{equation}
\begin{equation}\label{s1}
  \left\Vert S_{\bm{n}}\right\Vert_{(d)}\lesssim \Pi(\n),%\prod_{l=1}^d\log n_l,
\end{equation}
\begin{equation}\label{e20}
  \Vert R_{\bm{n}}\Vert_{(d)}\lesssim \Pi(\n'), %\prod_{l=1}^{d-1}\log n_l,
\end{equation}
%and
\begin{equation}\label{nM}
 \left\Vert F_{\bm{n}} \right\Vert_{(d-1)}
\lesssim \Pi(\n). %\prod_{l=1}^d \log n_l.
\end{equation}
\end{lemma}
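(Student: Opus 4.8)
The plan is to argue by induction on the dimension $d$, the induction hypothesis being that \eqref{d1} already holds one dimension down, i.e. $\Vert D_{\n'}\Vert_{(d-1)}\lesssim\Pi(\n')$ for every admissible $\n'\in(3,+\infty)^{d-1}$; the base case $d=1$ is the classical bound $\Vert D_{n_1}\Vert_{(1)}\lesssim\log n_1$ for the one–dimensional Dirichlet kernel. The logical backbone is Lemma~\ref{leE}. Since the inner translation is measure preserving on $\T^{d-1}$ and $|e^{in_dx_d}|=1$, one has
$$\Vert e^{in_dx_d}F_{\n}(\x'-x_d\m^{(d-1)})\Vert_{(d)}=2\pi\Vert F_{\n}\Vert_{(d-1)},$$
so \eqref{e19} and the triangle inequality yield $\Vert D_{\n}\Vert_{(d)}\le\Vert S_{\n}\Vert_{(d)}+2\pi\Vert F_{\n}\Vert_{(d-1)}+\Vert R_{\n}\Vert_{(d)}$. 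Thus \eqref{d1} is a formal consequence of \eqref{s1}, \eqref{nM} and \eqref{e20}, and it suffices to establish those three.

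For \eqref{e20} I would integrate \eqref{eR} first in $\x'$. Every term is a $\d$–difference of $D_{\n'}$, whose $L_1(\T^{d-1})$–norm, after discarding the harmless shift, is at most $2\Vert D_{\n'}\Vert_{(d-1)}\lesssim\Pi(\n')$. The first summand then contributes $2\pi\Vert D_{\n'}\Vert_{(d-1)}$, while in the series one uses $|2\pi\nu+x_d|\ge\pi|\nu|$ on $\T$ to get $\sum_{\nu\neq0}\frac{1}{|\nu||2\pi\nu+x_d|}\lesssim\sum_{\nu\neq0}\nu^{-2}<\infty$, the remaining factor $|x_d|$ being integrated over $(-\pi,\pi]$. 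Both pieces are $\lesssim\Pi(\n')$, which is \eqref{e20}. For \eqref{s1} I would split the $x_d$–integral at $|x_d|=1/n_d$. On $|x_d|>1/n_d$ I use the second form of $S_{\n}$ in \eqref{e2}, bound $|S_{\n}|\le|x_d|^{-1}\,|\d_{n_dx_d,\frac1{\n'}}D_{\n'}(\x')|$, integrate in $\x'$ to obtain $\lesssim|x_d|^{-1}\Pi(\n')$, and then $\int_{1/n_d<|x_d|\le\pi}|x_d|^{-1}\,{\rm d}x_d\lesssim\log n_d$; this part is $\lesssim\Pi(\n')\log n_d=\Pi(\n)$. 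On $|x_d|\le1/n_d$ I discard the oscillation: writing $\frac{e^{i\L_d(\k')x_d}-1}{ix_d}=\int_0^{\L_d(\k')}e^{i\xi x_d}\,{\rm d}\xi$ and exchanging summation and integration gives $S_{\n}(\x)=\int_0^{n_d}e^{i\xi x_d}D^{(\xi)}(\x')\,{\rm d}\xi$, where $D^{(\xi)}$ is the Dirichlet kernel of the contracted simplex $\{\sum_j k_j/n_j<1-\xi/n_d\}$. This is again a $(d-1)$–dimensional simplex with parameters $\le n_j$, so the induction hypothesis gives $\Vert D^{(\xi)}\Vert_{(d-1)}\lesssim\Pi(\n')$ uniformly in $\xi$; bounding $|e^{i\xi x_d}|=1$ yields $\int_{\T^{d-1}}|S_{\n}|\,{\rm d}\x'\lesssim n_d\Pi(\n')$, and multiplying by the length $2/n_d$ of this range produces $\lesssim\Pi(\n')\le\Pi(\n)$. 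Adding the two ranges gives \eqref{s1}.

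The real difficulty is \eqref{nM}, and I expect it to be the main obstacle: neither $\Vert D_{\n}\Vert_{(d)}$ nor $\Vert F_{\n}\Vert_{(d-1)}$ can be extracted from Lemma~\ref{leE} alone (the two unknowns satisfy essentially a single triangle inequality), so \eqref{nM} must be proved independently. My starting point would be the Fourier expansion of the sawtooth $\{\xi\}=\frac12+\sum_{\nu\neq0}\frac{1}{2\pi i\nu}e^{2\pi i\nu\xi}$, which after inserting $\L_d(\k')=n_d-(\m^{(d-1)},\k')$ turns $F_{\n}$ into
$$F_{\n}(\x')=\tfrac12 D_{\n'}(\x')+\sum_{\nu\neq0}\frac{e^{2\pi i\nu n_d}}{2\pi i\nu}D_{\n'}(\x'-2\pi\nu\m^{(d-1)}).$$
Truncating at $|\nu|\le N$ with $N$ of the order of $n_d$, the shifts are harmless and $\sum_{1\le\nu\le N}\nu^{-1}\lesssim\log n_d$ reproduces exactly the desired main term $\lesssim\log n_d\,\Pi(\n')=\Pi(\n)$, which is encouraging.

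The obstacle is the tail $\sum_{|\nu|>N}$. A pointwise estimate of the sawtooth remainder fails, because the numbers $\L_d(\k')$ may cluster near integers, and crude size bounds (or passing through $L_2$ via Parseval) lose a power of $n$. Controlling this tail demands genuine cancellation among the shifted kernels $D_{\n'}(\x'-2\pi\nu\m^{(d-1)})$ — for instance through summation by parts in $\nu$ combined with a uniform bound on the $L_1(\T^{d-1})$–norms of the partial sums $\sum_{N<\nu\le M}e^{2\pi i\nu n_d}D_{\n'}(\x'-2\pi\nu\m^{(d-1)})$. This is precisely the point at which the pivotal Lemma~\ref{main} should be invoked, after which \eqref{nM} follows and, with it, \eqref{d1} via the reduction above.
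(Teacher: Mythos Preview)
Your treatment of \eqref{e20} and \eqref{s1} is fine and matches the paper in spirit (the paper simply cites \cite{KL} for \eqref{d1} and says \eqref{s1} is proved as in \cite{KL}, but the concrete argument is exactly the one you wrote, with the induction hypothesis replaced by the known $(d-1)$--dimensional case of \eqref{d1}).

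The gap is in \eqref{nM}, and it is not where you think. You have the logical direction reversed. In the paper, \eqref{d1} is \emph{not} derived from \eqref{s1}, \eqref{e20}, \eqref{nM}; it is the starting point, taken as known from \cite{KL}. Once \eqref{d1} and \eqref{s1} are both in hand at level $d$, one simply rearranges \eqref{e19} as
\[
e^{in_dx_d}F_{\n}(\x'-x_d\m^{(d-1)})=S_{\n}(\x)-D_{\n}(\x)+R_{\n}(\x),
\]
integrates over $\T^d$, and uses periodicity in $\x'$ to get
\[
2\pi\Vert F_{\n}\Vert_{(d-1)}\le \Vert S_{\n}\Vert_{(d)}+\Vert D_{\n}\Vert_{(d)}+\Vert R_{\n}\Vert_{(d)}\lesssim \Pi(\n).
\]
So \eqref{nM} is a one--line corollary, not the hard step. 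Your diagnosis that ``the two unknowns satisfy essentially a single triangle inequality'' is precisely why the paper does not try to extract \eqref{d1} from the other three: it imports \eqref{d1} from outside.

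Your proposed rescue---invoking Lemma~\ref{main}---would be circular: the proof of Lemma~\ref{main} uses \eqref{nM} (and \eqref{d1}, \eqref{s1}, \eqref{e20}) repeatedly. Likewise the sawtooth expansion you wrote down does not close: the tail $\sum_{|\nu|>N}$ cannot be controlled by the argument you sketch, since the shifted kernels $D_{\n'}(\x'-2\pi\nu\m^{(d-1)})$ have no cancellation in $L_1$ that summation by parts can exploit (each has the same $L_1$ norm). The fix is not more cancellation; it is to accept \eqref{d1} from \cite{KL} and reverse the order of deductions.
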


\begin{proof}
Estimate~\eqref{d1} can be found in~\cite{KL}.
The proof of~\eqref{s1} is the same as the proof of Lemma~3.3 in~\cite{KL}. The proof of~\eqref{e20} follows easily from~\eqref{eR} and~\eqref{d1}.
Inequality~\eqref{nM} follows from \eqref{d1}, \eqref{s1}, ~\eqref{e19}, and periodicity of the functions $D_{\bm{n}}$,
$F_{\bm{n}}$, and $R_{\bm{n}}$ as functions of the variables $x_1,\dots,x_{d-1}$.
\end{proof}

\begin{remark}\label{lenM} %{\sc (See~\cite{KoLo})}
 Let $\n \in (3,+\infty)^d$. Then
\begin{equation*}
%\label{nM+}
 \left\Vert F_{\bm{n}} \right\Vert_{(d-1)}
\lesssim \log N\, \Pi(\n'),%\prod_{l=1}^{d-1} \log n_l,
\end{equation*}
where
$N=\max\{n_1,\dots,n_{d-1}\}$.
\end{remark}

\begin{proof}
The proof of the remark comes as a more accurate proof of Lemma~3.9 in~\cite{KL}.
\end{proof}

\begin{lemma}\label{ld2}
For any $\a,\b\in \R$ and $n\in\N$, the following equality holds:
\begin{equation}\label{eqL}
  \begin{split}
    \int_\T\int_\T |e^{i(\a y+\b)}D_n(x-y)-D_n(x)|\,{\rm d}x\,{\rm d}y
%    &=\int_\T\int_\T \(|D_n(x-y)|+|D_n(x)|\)dxdy+\mathcal{O}(\log\log n_1)\\
%    &=4\pi\int_\T |D_n(x)|dx+\mathcal{O}(\log\log n_1)\\
    =4\pi\Vert D_n\Vert_{(1)}+\mathcal{O}(\log\log n),
  \end{split}
\end{equation}
where the constant in $\mathcal{O}$ is independent of $\a$ and $\b$.
\end{lemma}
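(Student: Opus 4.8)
The plan is to reduce everything to an elementary pointwise inequality together with a single ``overlap'' integral that decouples after a change of variables. Write $A=A(x,y)=e^{i(\a y+\b)}D_n(x-y)$ and $B=B(x)=D_n(x)$. The starting point is the elementary bound, valid for all complex $A,B$,
\[
0\le |A|+|B|-|A-B|\le 2\min(|A|,|B|),
\]
where the right-hand inequality is just the reverse triangle inequality applied to the larger of $|A|,|B|$. Since $|e^{i(\a y+\b)}|=1$ and $D_n$ is $2\pi$-periodic, $\int_\T|A|\,{\rm d}x=\int_\T|B|\,{\rm d}x=\Vert D_n\Vert_{(1)}$ for every $y$; integrating the inequality in $x$ therefore gives
\[
\int_\T|A-B|\,{\rm d}x=2\Vert D_n\Vert_{(1)}-E(y),\qquad 0\le E(y)\le 2\int_\T\min(|D_n(x-y)|,|D_n(x)|)\,{\rm d}x .
\]
Integrating now in $y$ over $\T$ (of total measure $2\pi$) produces the claimed main term \emph{exactly}, namely $\int_\T\int_\T|A-B|\,{\rm d}x\,{\rm d}y=4\pi\Vert D_n\Vert_{(1)}-\int_\T E(y)\,{\rm d}y$. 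It then remains only to control $\int_\T E(y)\,{\rm d}y$, and this quantity is manifestly independent of $\a$ and $\b$, which is precisely what will deliver the required uniformity.

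Next I would estimate the overlap integral
\[
M:=\int_\T\int_\T \min(|D_n(x-y)|,|D_n(x)|)\,{\rm d}x\,{\rm d}y ,
\]
since $0\le\int_\T E(y)\,{\rm d}y\le 2M$. The crucial simplification is the measure-preserving substitution $u=x$, $v=x-y$ on $\T^2$ (Jacobian $1$, and $v$ again runs over $\T$ by periodicity), which \emph{decouples} the two kernels:
\[
M=\int_\T\int_\T \min(|D_n(u)|,|D_n(v)|)\,{\rm d}u\,{\rm d}v .
\]
Applying the layer-cake representation $\min(a,b)=\int_0^\infty \chi_{\{a>\lambda\}}\,\chi_{\{b>\lambda\}}\,{\rm d}\lambda$ and Fubini, this becomes $M=\int_0^\infty \mu(\lambda)^2\,{\rm d}\lambda$, where $\mu(\lambda):=\big|\{x\in\T:\ |D_n(x)|>\lambda\}\big|$.

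Finally I would bound the distribution function $\mu$ by the standard pointwise estimate $|D_n(x)|\lesssim \min(n,|x|^{-1})$ on $\T$. This gives $\{|D_n|>\lambda\}\subseteq\{|x|\lesssim \lambda^{-1}\}$, so $\mu(\lambda)\lesssim\min(1,\lambda^{-1})$ and hence $\mu(\lambda)^2\lesssim\min(1,\lambda^{-2})$, whence
\[
M\lesssim \int_0^1 1\,{\rm d}\lambda+\int_1^\infty \lambda^{-2}\,{\rm d}\lambda=\mathcal{O}(1).
\]
Thus $\int_\T E(y)\,{\rm d}y=\mathcal{O}(1)$ uniformly in $\a,\b$, which in particular is $\mathcal{O}(\log\log n)$ and proves the lemma. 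The nominal obstacle is the overlap integral $M$, but the real content lies in the decoupling step: integrating over the \emph{whole} shift $y\in\T$ averages out the delicate near-diagonal behaviour, whereas for a \emph{fixed} $y\sim n^{-1}$ the inner overlap $\int_\T\min(|D_n(x-y)|,|D_n(x)|)\,{\rm d}x$ is as large as $\log n$. Because of this averaging, no cancellation argument and no fine analysis of the phase $e^{i(\a y+\b)}$ is needed, and in fact the route yields the sharper error $\mathcal{O}(1)$, the stated $\mathcal{O}(\log\log n)$ following a fortiori.
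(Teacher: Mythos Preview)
Your argument is correct and in fact yields the sharper error $\mathcal{O}(1)$, uniformly in $\a,\b$; the statement with $\mathcal{O}(\log\log n)$ follows a fortiori. The pointwise bound $0\le |A|+|B|-|A-B|\le 2\min(|A|,|B|)$, the decoupling substitution $(x,y)\mapsto(x,x-y)$, the layer-cake identity $M=\int_0^\infty\mu(\lambda)^2\,{\rm d}\lambda$, and the distribution-function estimate $\mu(\lambda)\lesssim\min(1,\lambda^{-1})$ from $|D_n(x)|\lesssim\min(n,|x|^{-1})$ are all valid.

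The paper proceeds differently: it splits the $x$-integration at $|x|=\frac1{\log n}$ and observes that on each of the two regions one of the two terms has small $L_1$-mass after integration in $y$ (namely $\int_\T\int_{|x|\le 1/\log n}|D_n(x-y)|\,{\rm d}x\,{\rm d}y=\mathcal{O}(1)$ and $\int_\T\int_{|x|\ge 1/\log n}|D_n(x)|\,{\rm d}x\,{\rm d}y=\mathcal{O}(\log\log n)$), so that $|A-B|$ may be replaced by $|A|+|B|$ up to those errors. Your route is more streamlined and gives a better remainder; the paper's splitting argument, on the other hand, is the template that is reused verbatim in the proofs of Lemmas~\ref{lemAs1} and~\ref{main}, where one again shows that $|S_{\n}|$ and $|F_{\n}|$ live on essentially disjoint regions, so its appearance here is partly for consistency of method.
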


\begin{proof}
We have
\begin{equation*}
  \int_\T\int_\T |e^{i(\a y+\b)}D_n(x-y)-D_n(x)|\,{\rm d}x\,{\rm d}y=\int_\T \int_{|x|\le\frac1{\log n}}+\int_\T \int_{\frac1{\log n}\le |x|\le \pi}:=J_1+J_2.
\end{equation*}
Taking into account that
$$
\int_\T \int_{|x|\le\frac1{\log n}}|D_n(x-y)|\,{\rm d}x\,{\rm d}y=\int_{|x|\le\frac1{\log n}}\int_\T|D_n(y)|\,{\rm d}x\,{\rm d}y=\mathcal{O}(1),
$$
we derive that
$$
J_1=\int_\T \int_{|x|\le\frac1{\log n}} \(|D_n(x-y)|+|D_n(x)|\)\,{\rm d}x\,{\rm d}y+\mathcal{O}(1).
$$
At the same time, since
$$
\int_\T \int_{\frac1{\log n}\le |x|\le \pi}|D_n(x)|\,{\rm d}x\,{\rm d}y=\mathcal{O}(\log \log n),
$$
we get
$$
J_2=\int_\T \int_{\frac1{\log n}\le |x|\le \pi} \(|D_n(x-y)|+|D_n(x)|\)\,{\rm d}x\,{\rm d}y+\mathcal{O}(\log \log n).
$$
Combining the above estimates for $J_1$ and $J_2$, we derive~\eqref{eqL}.
\end{proof}

\noindent We will also need the following well-known statements (see, e.g.,~\cite[p.~46 and p.~102]{DeLo}).
\begin{lemma}\label{lsp1}
For each continuously differentiable $2\pi$-periodic  $g:\,\mathbb{R}\to \mathbb{R}$ and $h\in\mathbb{R}$
\begin{equation} \label{sp1}
\|g(\,\cdot+h)-g(\,\cdot\,)\|_{L_1(\T)}\leq |h|\|g'\|_{L_1(\T)}.
\end{equation}
 For each trigonometric polynomial $\tau_n$ of degree at most $n$, the Bernstein inequality
\begin{equation} \label{sp2}
\|\tau'_n\|_{L_1(\T)}\leq n\|\tau_n\|_{L_1(\T)}
\end{equation}
holds.
\end{lemma}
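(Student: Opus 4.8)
Both assertions are entirely classical, so the plan is to reduce each to a short self-contained argument and, in the write-up, simply cite \cite{DeLo}. For \eqref{sp1} the idea is to represent the increment by the fundamental theorem of calculus. Writing $g(x+h)-g(x)=\int_0^h g'(x+t)\,{\rm d}t$ and applying the integral form of the triangle inequality together with Fubini's theorem, I would obtain
\[
\|g(\,\cdot+h)-g(\,\cdot\,)\|_{L_1(\T)}\le\int_0^{|h|}\int_\T|g'(x+t)|\,{\rm d}x\,{\rm d}t .
\]
The translation invariance of the $L_1(\T)$-norm then turns the inner integral into $\|g'\|_{L_1(\T)}$, a constant independent of $t$, and integrating over $t\in[0,|h|]$ yields exactly the factor $|h|$. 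This step is routine and carries no real difficulty.

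For the Bernstein inequality \eqref{sp2} the plan is to invoke M. Riesz's interpolation formula, which expresses $\tau_n'$ as a finite combination of translates of $\tau_n$:
\[
\tau_n'(x)=\frac1{4n}\sum_{k=1}^{2n}\frac{(-1)^{k-1}}{\sin^2\frac{(2k-1)\pi}{4n}}\,\tau_n\Big(x+\frac{(2k-1)\pi}{2n}\Big).
\]
Taking the $L_1(\T)$-norm, the triangle inequality and translation invariance bound $\|\tau_n'\|_{L_1(\T)}$ by $\Big(\sum_{k=1}^{2n}\frac1{4n\sin^2\frac{(2k-1)\pi}{4n}}\Big)\|\tau_n\|_{L_1(\T)}$, and the arithmetic identity $\sum_{k=1}^{2n}\frac1{4n\sin^2\frac{(2k-1)\pi}{4n}}=n$ produces precisely the constant $n$.

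The only genuinely nontrivial point is the Bernstein step with the \emph{sharp} constant $n$ in the $L_1$ metric; here the delicacy is two-fold: establishing the Riesz formula itself (which rests on the fact that both sides are trigonometric polynomials of degree $n$ agreeing on the nodes $\frac{(2k-1)\pi}{2n}$, or equivalently on a partial-fraction expansion of the associated kernel) and evaluating the coefficient sum to exactly $n$. An alternative route would be to construct a single convolution kernel $\psi_n$ with $\widehat{\psi_n}(k)=ik$ for $|k|\le n$ and $\|\psi_n\|_{L_1(\T)}=n$, so that $\tau_n'=\psi_n*\tau_n$ and Young's inequality gives the bound; but producing such a kernel with $L_1$-norm exactly $n$ is essentially equivalent to the Riesz formula. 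Since both facts are standard, I would in the end just cite them, as the authors do.
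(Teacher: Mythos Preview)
Your proposal is correct; the paper itself gives no proof of this lemma at all, simply labelling both inequalities as well-known and citing \cite[p.~46 and p.~102]{DeLo}. Your sketch --- fundamental theorem of calculus plus Fubini and translation invariance for \eqref{sp1}, the M.~Riesz interpolation formula with the coefficient identity for \eqref{sp2} --- is exactly the standard argument found at those pages of DeVore--Lorentz, so you are supplying the details behind the citation rather than taking a different route.
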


The following lemma is an important ingredient of our study. It shows that the norm of the Dirichlet kernel in (\ref{e19})
is the sum of the norms of the first two essential summands, with an appropriate estimate of the remainder term.

\begin{lemma}\label{lemAs1}
Let $\n \in (3,+\infty)^d$. Then
\begin{equation}\label{As1}
  \left\Vert D_{\bm{n}}\right\Vert_{(d)}=\left\Vert S_{\bm{n}}\right\Vert_{(d)}+
  2\pi\left\Vert F_{\bm{n}}\right\Vert_{({d-1})}+\mathcal{O}\(\log\log n_d \cdot \Pi(\n')\).
\end{equation}
%where $\mathcal{O}$ is independent of $\bm{M}^{(d)}$.
\end{lemma}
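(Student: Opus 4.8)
The goal is to show that in the decomposition
\begin{equation*}
D_{\bm{n}}(\bm{x})=S_{\bm{n}}(\bm{x})-e^{in_dx_d}F_{\bm{n}}(\bm{x}'-x_d\bm{m}^{(d-1)})+R_{\bm{n}}(\bm{x})
\end{equation*}
from Lemma~\ref{leE}, the $L_1(\T^d)$ norm splits additively into the norms of the first two summands, modulo the stated error. The plan is to proceed in two reductions. First I would show that the remainder $R_{\bm{n}}$ contributes only to the error term: by~\eqref{e20} we have $\Vert R_{\bm{n}}\Vert_{(d)}\lesssim \Pi(\n')$, and since $\Pi(\n')=\prod_{j=1}^{d-1}\log n_j$ is dominated by $\log\log n_d\cdot\Pi(\n')$ only if we are careful, I would instead absorb it directly---note $\Pi(\n')\le \log\log n_d\cdot\Pi(\n')$ fails, so the cleaner route is to observe $\Vert R_{\bm n}\Vert_{(d)}\lesssim \Pi(\n')$ is already of lower order than the $\log\log n_d\cdot\Pi(\n')$ error once one checks $1\lesssim \log\log n_d$ for $n_d>3$. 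Hence by the triangle inequality it suffices to prove
\begin{equation*}
\left\Vert S_{\bm{n}}(\bm{x})-e^{in_dx_d}F_{\bm{n}}(\bm{x}'-x_d\bm{m}^{(d-1)})\right\Vert_{(d)}
=\left\Vert S_{\bm{n}}\right\Vert_{(d)}+2\pi\left\Vert F_{\bm{n}}\right\Vert_{(d-1)}+\mathcal{O}\(\log\log n_d\cdot\Pi(\n')\).
\end{equation*}

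The second and main reduction is to separate the two remaining terms according to the size of $x_d$. The key structural fact is that $S_{\bm n}=\frac{1}{ix_d}\d_{n_dx_d,\frac1{\n'}}D_{\bm n'}(\bm x')$ carries a factor $1/x_d$, so it is large precisely where $x_d$ is small, whereas the $F$-term is a genuine $(d-1)$-dimensional kernel in $\bm x'-x_d\bm m^{(d-1)}$ whose $L_1$ behavior in $x_d$ is essentially uniform. The strategy is to integrate first in $\bm x'$ for fixed $x_d$, then in $x_d$. On the region where $|x_d|$ is bounded away from $0$ (say $|x_d|\ge 1/\log n_d$), the factor $1/x_d$ keeps $S_{\bm n}$ under control and one expects the $F$-term to dominate, integrating (after the measure-preserving shift $\bm x'\mapsto \bm x'+x_d\bm m^{(d-1)}$, using periodicity) to $2\pi\Vert F_{\bm n}\Vert_{(d-1)}$ up to the $\log\log n_d$ loss coming from the excised small region. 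On the complementary region $|x_d|<1/\log n_d$, the $S_{\bm n}$-term dominates and contributes $\Vert S_{\bm n}\Vert_{(d)}$, with the cross terms and the tails controlled by the smoothness estimate~\eqref{sp1}, the Bernstein inequality~\eqref{sp2}, and the norm bounds of Lemma~\ref{thM}.

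The pivotal technical input will be Lemma~\ref{ld2}, which evaluates a two-variable difference integral of a one-dimensional Dirichlet kernel as $4\pi\Vert D_n\Vert_{(1)}+\mathcal{O}(\log\log n)$; this is exactly the mechanism that converts a norm of a \emph{difference} $e^{i(\a y+\b)}D_n(x-y)-D_n(x)$ into the \emph{sum} of norms. I would apply it (or a $d$-dimensional adaptation of its proof idea) in the variable $x_d$, treating the slice integrals in $\bm x'$ as the one-dimensional kernel norms, to upgrade the orthogonality-in-support heuristic into the precise additive splitting, with the $\log\log n_d$ term tracking the overlap near $x_d=0$.

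The main obstacle, I expect, is making rigorous the claim that the supports of the two terms in $x_d$ are \emph{effectively} disjoint---i.e.\ that the region where both $S_{\bm n}$ and the shifted $F$-term are simultaneously large has integral only $\mathcal{O}(\log\log n_d\cdot\Pi(\n'))$. This requires quantitative control of $S_{\bm n}$ away from $x_d=0$ and of $F_{\bm n}$ near $x_d=0$, combining the difference operator $\d$ appearing in~\eqref{e2} with the periodicity and the mean-value/Bernstein estimates of Lemma~\ref{lsp1}. Handling the infinite sum over $\nu$ hidden inside $R_{\bm n}$ and ensuring it does not spill into the principal terms is the part most likely to demand the delicate bookkeeping that the author defers to the auxiliary lemmas.
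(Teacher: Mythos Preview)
Your splitting strategy at $|x_d|=1/\log n_d$ is exactly the paper's approach, and your intuition that $S_{\bm n}$ lives near $x_d=0$ while the $F$-term is uniform in $x_d$ is the whole story. However, you overestimate the difficulty: neither Lemma~\ref{ld2} nor the Bernstein/mean-value estimates of Lemma~\ref{lsp1} are used here---those tools enter only in the much harder Lemma~\ref{main}. The paper's proof of Lemma~\ref{lemAs1} uses nothing beyond the triangle inequality and the norm bounds of Lemma~\ref{thM}: on $|x_d|<1/\log n_d$ the $F$-integral is $\lesssim \|F_{\bm n}\|_{(d-1)}\cdot \tfrac{2}{\log n_d}\lesssim \Pi(\n')$ by~\eqref{nM}, while on $|x_d|\ge 1/\log n_d$ the $S$-integral is $\lesssim \log\log n_d\cdot\|D_{\bm n'}\|_{(d-1)}\lesssim \log\log n_d\cdot\Pi(\n')$ by~\eqref{e2} and~\eqref{d1}. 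There are no ``cross terms'' to control: on each of the two regions one of $|S_{\bm n}|,|F_{\bm n}|$ already integrates to $\mathcal O(\text{error})$, so $\int|D_{\bm n}|=\int|S_{\bm n}|+\int|F_{\bm n}(\cdot-x_d\bm m^{(d-1)})|+\mathcal O(\text{error})$ on that region follows immediately from $D=S-e^{i\cdot}F+R$ and~\eqref{e20}. Summing the two regions and using periodicity in $\bm x'$ gives~\eqref{As1}.
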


\begin{proof}
The assertion of the lemma follows from equality~\eqref{e19}, the proof that the norms of the first two summands
are attained on different sets, and proper estimates of the remainder terms. Indeed, using~\eqref{nM}, we get
\begin{equation*}
%\label{e21}
  \begin{split}
    &\int_{\T^{d-1}}\int_{|x_d|\le \frac1{\log n_d}}|F_{\n}(\x'-x_d \bm{m}^{(d-1)})|\,{\rm d}\x\lesssim \Pi(\n)\int_{|x_d|\le \frac1{\log n_d}}\,{\rm d}x_d\lesssim \Pi(\n'),
  \end{split}
\end{equation*}
which along with~\eqref{e19} and~\eqref{e20} implies
\begin{equation}\label{e22}
  \begin{split}
    &\int_{\T^{d-1}}\int_{|x_d|\le \frac1{\log n_d}}|D_{\n}(\x)|{\rm d}\x=\int_{\T^{d-1}}\int_{|x_d|\le \frac1{\log n_d}}|S_{\n}(\x)|\,{\rm d}\x\\
    &\qquad+\int_{\T^{d-1}}\int_{|x_d|\le \frac1{\log n_d}}|F_{\n}(\x'-\bm{m}^{(d-1)}x_d)|\,{\rm d}\x+\mathcal{O}\(\Pi(\n')\).
  \end{split}
\end{equation}
At the same time, using~\eqref{e2} and~\eqref{d1}, it is not difficult to see that
\begin{equation*}
%\label{e23}
  \begin{split}
    \int_{\frac1{\log n_d}\le |x_d|\le \pi}\int_{\T^{d-1}}|S_{\n}(\x)|\,{\rm d}\x
    &\le 2\log\log n_d \Vert D_{\n'}\Vert_{({d-1})}\lesssim \log\log n_d\cdot \Pi(\n'),
  \end{split}
\end{equation*}
which, again along with~\eqref{e19} and~\eqref{e20}, implies
\begin{equation}\label{e24}
  \begin{split}
    &\int_{\T^{d-1}}\int_{\frac1{\log n_d}\le |x_d|\le \pi}|D_{\n}(\x)|\,{\rm d}\x
    =\int_{\T^{d-1}}\int_{\frac1{\log n_d}\le |x_d|\le \pi}|S_{\n}(\x)|\,{\rm d}\x\\
    &\qquad+\int_{\T^{d-1}}\int_{\frac1{\log n_d}\le |x_d|\le \pi}|F_{\n}(\x'-x_d\bm{m}^{(d-1)})|\,{\rm d}\x+\mathcal{O}\(\log\log n_d\cdot \Pi(\n')\).
  \end{split}
\end{equation}
Finally, combining~\eqref{e22} and \eqref{e24}, we obtain~\eqref{As1}.
\end{proof}

Crucial as the previous lemma may seem, the next one is even more important. It is
our decisive tool, in a sense.

\begin{lemma}\label{main}
Let $\n \in (3,+\infty)^d$ and $n_1\le n_2\le \dots \le n_d$.  Then
\begin{equation}\label{eM}
\begin{split}
    \Vert D_{\n}\Vert_{(d)}=\Vert D_{n_1 \mathbf{1}^d}\Vert_{(d)}&+2\sum_{\nu=1}^{d-1}\log\frac{n_{d-\nu}}{n_1} \Vert D_{n_1 \mathbf{1}^\nu,\n^{d-\nu}}\Vert_{(d-1)}\\
    &+\mathfrak{F}^d_{\n}+\mathcal{O}\(\log\log n_1 \Pi(\n_2)\),
\end{split}
\end{equation}
where $\mathfrak{F}_{\n^d}^d$ is defined in~\eqref{fr1}.
\end{lemma}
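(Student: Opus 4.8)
The plan is to combine the splitting of Lemma~\ref{lemAs1} with a careful one-variable analysis of $\|S_{\n}\|_{(d)}$. I would first apply Lemma~\ref{lemAs1} taking the last (largest) coordinate $n_d$ as the distinguished one:
$$\|D_{\n}\|_{(d)}=\|S_{\n}\|_{(d)}+2\pi\|F_{\n}\|_{(d-1)}+\mathcal{O}(\log\log n_d\cdot\Pi(\n')).$$
Because $n_1\le n_d$, a short estimate based on the eventual decay of $\frac{\log\log x}{\log x}$ gives $\log\log n_d\cdot\Pi(\n')\lesssim\log\log n_1\cdot\Pi(\n_2)$, so this remainder already lies within the budget of~\eqref{eM}. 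Since $2\pi\|F_{\n}\|_{(d-1)}=2\pi\|F_{\n^d}\|_{(d-1)}$ is exactly the leading piece of the $l=0$ summand of $\mathfrak{F}^d_{\n}$ in~\eqref{fr1}, it remains to show that $\|S_{\n}\|_{(d)}$ accounts for the diagonal term $\|D_{n_1\mathbf{1}^d}\|_{(d)}$, the logarithmically weighted $(d-1)$-dimensional terms, and all remaining pieces of $\mathfrak{F}^d_{\n}$.

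Using~\eqref{e2} and integrating first in $\x'$, I would rewrite
$$\|S_{\n}\|_{(d)}=\int_{-\pi}^{\pi}\frac{H(x_d)}{|x_d|}\,dx_d,\qquad H(x_d):=\big\|e^{in_dx_d}D_{\n'}(\,\cdot\,-x_d\m^{(d-1)})-D_{\n'}\big\|_{(d-1)},$$
where the shift $x_d\m^{(d-1)}$ has $j$-th component $x_dn_d/n_j$. The key is that $H$ is controlled by the hierarchy of resonance scales $2\pi n_j/n_d$, $j=1,\dots,d-1$, which are ordered because $n_1\le\dots\le n_{d-1}$: as $|x_d|$ increases past $2\pi n_j/n_d$ the $j$-th shift begins to wrap the period. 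Off all resonances the two kernels decorrelate and $H(x_d)\approx2\|D_{\n'}\|_{(d-1)}$; integrating $2\|D_{\n'}\|_{(d-1)}/|x_d|$ across the band between two consecutive thresholds contributes a factor $\log\frac{n_{j+1}}{n_j}$, and telescoping the bands is what produces the weights $2\log\frac{n_{d-\nu}}{n_1}$ together with the mixed kernels $\|D_{n_1\mathbf{1}^\nu,\n^{d-\nu}}\|_{(d-1)}$, in which the already-wrapped fast coordinates have collapsed to the common scale $n_1$. The innermost band $|x_d|\lesssim n_1/n_d$, where no coordinate has wrapped, is where the fully isotropic term $\|D_{n_1\mathbf{1}^d}\|_{(d)}$ is recovered.

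The quantitative engine is Lemma~\ref{ld2}. Near a resonance $x_d\approx2\pi\mu\,n_j/n_d$ the deficit $H(x_d)-2\|D_{\n'}\|_{(d-1)}$ is a difference of the shape $\|\d\,D\|-2\|D\|$, which Lemma~\ref{ld2}, applied in the wrapping coordinate with the others carried along, evaluates up to $\mathcal{O}(\log\log)$ errors; the singular weight $1/|x_d|\approx(2\pi|\mu|\,n_j/n_d)^{-1}$ then converts the sum over resonances $\mu$ into exactly the $\frac{1}{|\mu|}$-weighted integrals displayed in~\eqref{fr1}, while the norm differences $\|F_{n_1\mathbf{1}^l,\n^{k-l}}\|-\|F_{n_1\mathbf{1}^l,\n^{k-l-1},n_1}\|$ encode the change of the distinguished scale as successive coordinates collapse. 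I would organize the whole accounting by induction on $d$, the $(d-1)$-dimensional kernels $D_{\n'}$ inside $H$ being treated by the same decomposition one dimension lower, and use Lemma~\ref{thM} and Remark~\ref{lenM} to sweep all lower-order contributions into $\mathcal{O}(\log\log n_1\,\Pi(\n_2))$.

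The main obstacle will be the resonance analysis of $H$: establishing that decorrelation is essentially complete off the thresholds, controlling how the deficits from different coordinates overlap (this is where the coupling of the simplex, unlike a product kernel, is genuinely delicate), and verifying that the errors accumulated over the $\sim n_d/n_1$ resonances, weighted by $1/|x_d|$, stay below $\log\log n_1\,\Pi(\n_2)$. Once this is in place, matching the precise constants and index ranges in~\eqref{fr1} is laborious but routine.
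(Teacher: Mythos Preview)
Your starting point (apply Lemma~\ref{lemAs1}, then reduce to a one-variable analysis of $H(x_d)=\|\d_{n_dx_d,1/\n'}D_{\n'}\|_{(d-1)}$ weighted by $1/|x_d|$) matches the paper, but the ``multi-scale band'' picture you build on top of it is a genuine misconception. There is \emph{no} hierarchy of thresholds $2\pi n_j/n_d$ in the analysis of $H$: once $|x_d|$ exceeds the \emph{single} threshold $\pi n_1/n_d$ (equivalently, $|x_d|>\pi$ after the rescaling $x_d\mapsto\frac{n_1}{n_d}x_d$ that you omit), one has $H(x_d)\approx 2\|D_{\n'}\|_{(d-1)}$ \emph{uniformly}, not with a kernel that changes from band to band. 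Integrating $H/|x_d|$ over the wrapped region therefore produces only the single term $2\log\frac{n_d}{n_1}\,\|D_{\n'}\|_{(d-1)}$ plus the $\mathfrak{f}_k$-corrections; the mixed kernels $\|D_{n_1\mathbf{1}^\nu,\ldots}\|_{(d-1)}$ do \emph{not} arise from ``already-wrapped coordinates collapsing to $n_1$'' inside one $x_d$-integration. They come instead from the inner piece $|x_d|\le\pi n_1/n_d$, which after the rescaling is the full norm $\|S_{\n',n_1}\|_{(d)}$ and hence, by Lemma~\ref{lemAs1} again, equals $\|D_{n_1,\n'}\|_{(d)}-2\pi\|F_{\n',n_1}\|_{(d-1)}+\mathcal{O}(\ldots)$. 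This yields a one-step recursion $\|D_{\n}\|_{(d)}=\|D_{n_1,\n'}\|_{(d)}+2\log\frac{n_d}{n_1}\|D_{\n'}\|_{(d-1)}+\cdots$, and the remaining collapsed kernels appear only after iterating this recursion $d-2$ more times, each time in the \emph{new} last variable --- not by induction on $d$.

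There is also a technical gap in your decorrelation step. Lemma~\ref{ld2} is one-dimensional and cannot be ``applied in the wrapping coordinate with the others carried along'' directly, because $D_{\n'}$ is not a product kernel. The paper's device is to decompose $D_{\n'}$ itself via Lemma~\ref{leE} \emph{inside} the difference operator (writing $D_{\n'}=S_{\n_2',n_1}-e^{i(\cdot)}F_{\n_2',n_1}+R_{\n_2',n_1}$ after reordering so that $n_1$ is in the last slot), and then to show separately that the $R$- and cross-$S$-contributions are $\mathcal{O}(\log\log n_1\cdot\Pi(\n_2'))$ via estimates in the spirit of Lemma~\ref{ld2} but carried out by hand in the distinguished coordinate $x_1$. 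This is where the precise structure of the $\mathfrak{f}_k$-integrals (and hence of $\mathfrak{F}^d_{\n}$) comes from; your sketch does not supply a mechanism for it.
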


\begin{proof}
In (\ref{As1}), we first consider the norm  $\left\Vert S_{\n}\right\Vert_{(d)}$. Substituting $x_d \mapsto \frac{n_1}{n_d}x_d$, we get
\begin{equation}\label{m1}
  \begin{split}
    \left\Vert S_{\n}\right\Vert_{(d)}&=\int_{|x_d|\le \frac{n_d}{n_1}\pi} \left\Vert S_{\n',n_1}(\cdot,x_d)\right\Vert_{(d-1)}{\rm d}x_d\\
    &=\left\Vert S_{\n',n_1}\right\Vert_{(d)}+\int_{\pi\le|x_d|\le \frac{n_d}{n_1}\pi} \left\Vert S_{\n',n_1}(\cdot,x_d)\right\Vert_{(d-1)}{\rm d}x_d:=J_1+J_2.
  \end{split}
\end{equation}
Applying Lemma~\ref{lemAs1} to $J_1$ and using the fact that $\left\Vert D_{\n',n_1}\right\Vert_{(d)}=\left\Vert D_{n_1,\n'}\right\Vert_{(d)}$, we obtain
\begin{equation}\label{m2}
  J_1=\left\Vert D_{n_1,\n'}\right\Vert_{(d)}-2\pi\left\Vert F_{\n',n_1}\right\Vert_{(d-1)}+\mathcal{O}\(\log\log n_1 \Pi(\n')\).
\end{equation}
For $J_2$, using~\eqref{e2} and~\eqref{d1}, we derive
\begin{equation}\label{m3}
  \begin{split}
     J_2&=\sum_{1\le |k|\le [\frac{n_d}{2n_1}]} \int_{(2k-1)\pi}^{(2k+1)\pi}
     \left\Vert S_{\n',n_1}(\cdot,x_d)\right\Vert_{(d-1)}{\rm d}x_d+\mathcal{O}\(\Pi(\n')\)\\
     &=\sum_{1\le |k|\le [\frac{n_d}{2n_1}]} \int_{(2k-1)\pi}^{(2k+1)\pi}\frac{{\rm d}x_d}{|x_d|}
     \big\Vert \d_{n_1x_d,\frac1{\n'}}D_{\n'}\big\Vert_{(d-1)}+\mathcal{O}\(\Pi(\n')\)\\
     &=\sum_{1\le |k|\le [\frac{n_d}{2n_1}]}\frac1{2\pi |k|} \int_{-\pi}^{\pi}
     \big\Vert \d_{n_1(x_d+2\pi k),\frac1{\n'}} D_{\n'}\big\Vert_{(d-1)}\,{{\rm d}x_d}+\mathcal{O}(\Pi(\n')).\\
%     &=\sum_{1\le |k|\le [\frac{n_d}{2n_1}]}\frac1{2\pi |k|} \int_{-\pi}^{\pi}\,{dx_d}
%     \left\Vert \tau_{n_1(x_d+2\pi k),\frac1{\n'}} D_{\n'}\(\cdot+\frac{x_d}2 {\rm e}_1'\)\right\Vert_{(d-1)}+\mathcal{O}(\Pi(\n')).\\
  \end{split}
\end{equation}
Thus, applying Lemma~\ref{ld2}, we prove the lemma in the case $d=2$.

In the reset of the proof, we consider only the case $d\ge 3$. Denoting by $\d_{x_d,k}^*$ the operator
$$
\d_{x_d,k}^* f(\x_2',x_1):=e^{in_1(x_d+2\pi k)}f\(\x_2'-\frac{n_1}{\n_2'}(x_d+2\pi k),x_1-\frac{x_d}{2}\)-f\(\x_2',x_1+\frac{x_d}{2}\),
$$
where $f$ is an arbitrary function of variables $x_2,\dots,x_{d-1},x_1$, and taking into account the periodicity of $D_{\n'}$, it is easy to see that
$$
\d_{n_1(x_d+2\pi k),\frac1{\n'}} D_{\n'}\(\x'+\frac{x_d}2 {\rm e}_1^{d-1}\)=\d_{x_d,k}^* D_{\n_2',n_1}(\x_2',x_1).
$$
%, for simplicity $\tau^*=\tau_{n_1(x_d+2\pi k),{\widetilde{\m}'}}$, $\widetilde{\m}'=(\frac{n_1}{n_2},\dots,\frac{n_1}{n_{d-1}},1)$, and
Thus, using Lemma~\ref{leE}, we derive
\begin{equation}\label{m4}
\begin{split}
I_k&:=\int_{-\pi}^{\pi}
     \big\Vert \d_{n_1(x_d+2\pi k),\frac1{\n'}} D_{\n'}\big\Vert_{(d-1)}\, {{\rm d}x_d}\\
     &=\int_{-\pi}^{\pi}\big\Vert \d_{n_1(x_d+2\pi k),\frac1{\n'}} D_{\n'}\big(\cdot+\frac{x_d}2 {\rm e}_1^{d-1}\big)\big\Vert_{(d-1)}\,{{\rm d}x_d}\\
     &=\int_{-\pi}^{\pi}\big\Vert \d_{x_d,k}^* D_{\n_2',n_1}\big\Vert_{(d-1)}\,{{\rm d}x_d}=\int_{-\pi}^{\pi}{{\rm d}x_d}\int_{\T^{d-1}}\bigg|\d_{x_d,k}^* S_{\n_2',n_1}(\x_2',x_1)\\
&\qquad\qquad\qquad\qquad-\d_{x_d,k}^* \Big(e^{in_1 x_{1}}F_{\n_2',n_1}\big(\x_2'-x_1\frac{n_1}{\n_2'}\big)\Big)+\d_{x_d,k}^* R_{\n_2',n_1}(\x_2',x_1) \bigg|\, {\rm d}\x'.
\end{split}
\end{equation}
In the inner integral on the right-hand side, let us study each summand separately.

1)  Equality~\eqref{eR} and \eqref{d1} easily yield
\begin{equation}\label{m5}
  \begin{split}
\int_{-\pi}^{\pi}{{\rm d}x_d}\int_{\T^{d-1}}\big|\d_{x_d,k}^* R_{\n_2',n_1}(\x_2',x_1) \big|\, {\rm d}\x'
\lesssim \Vert D_{\n_2'}\Vert_{(d-2)}=\mathcal{O}\(\Pi(\n_2')\).
  \end{split}
\end{equation}

2)  Denoting $\Omega_{n_1}(x_d):=\{x_1\in \T\,:\, |x_1-\frac{x_d}{2}|<\frac1{\log n_1},\,\, |x_1+\frac{x_d}{2}|<\frac1{\log n_1}\}$
and using~\eqref{nM}, we derive
\begin{equation}\label{m6}
  \begin{split}
&\int_{-\pi}^{\pi}{{\rm d}x_d}\int_{\Omega_{n_1}(x_d)}\,{{\rm d}x_1}\int_{\T^{d-2}}\Big|\d_{x_d,k}^* \Big(e^{in_1 x_{1}}F_{\n_2',n_1}\Big(\x_2'-x_1\frac{n_1}{\n_2'}\Big)\Big) \Big|\, {\rm d}\x_2'\\
&\le 2\int_{-\pi}^{\pi}{{\rm d}x_d}\int_{\Omega_{n_1}(x_d)}\,{{\rm d}x_1} \Vert F_{\n_2',n_1}\Vert_{(d-2)}\lesssim \frac1{\log n_1} \Vert F_{\n_2',n_1}\Vert_{(d-2)} =\mathcal{O}\(\Pi(\n_2')\).
  \end{split}
\end{equation}

3) Further, applying~\eqref{d1}, we obtain
\begin{equation}\label{m7}
  \begin{split}
&\int_{-\pi}^{\pi}{{\rm d}x_d}\int_{\T\setminus\Omega_{n_1}(x_d)}{{\rm d}x_1}\int_{\T^{d-2}}\big|\d_{x_d,k}^* S_{\n_2',n_1}(\x_2',x_1) \big| {\rm d}\x_2'\\
&\le \int_{-\pi}^{\pi}{{\rm d}x_d}\int_{\frac1{\log n_1}<|x_1-\frac{x_d}{2}|}{{\rm d}x_1}\int_{\T^{d-2}}\Big|S_{\n_2',n_1}\Big(\x_2'-\frac{n_1}{\n_2'}(x_d+2\pi k),x_1-\frac{x_d}{2}\Big) \Big| {\rm d}\x_2'\\
&\quad+\int_{-\pi}^{\pi}{{\rm d}x_d}\int_{\frac1{\log n_1}<|x_1+\frac{x_d}{2}|}{{\rm d}x_1}\int_{\T^{d-2}}\Big|S_{\n_2',n_1}\(\x_2',x_1+\frac{x_d}{2}\) \Big| {\rm d}\x_2'\\
&\le 2\int_{-\pi}^{\pi}{{\rm d}x_d}\int_{\frac1{\log n_1}<|x_1-\frac{x_d}{2}|}{\frac{{\rm d}x_1}{|x_1-\frac{x_d}{2}|}}\int_{\T^{d-2}}|D_{\n_2'}\(\x_2'\)| {\rm d}\x_2'\\
&\quad+2\int_{-\pi}^{\pi}{{\rm d}x_d}\int_{\frac1{\log n_1}<|x_1+\frac{x_d}{2}|}{\frac{{\rm d}x_1}{|x_1+\frac{x_d}{2}|}}\int_{\T^{d-2}}|D_{\n_2'}\(\x_2'\)| {\rm d}\x_2'=\mathcal{O}(\log\log n_1\Pi(\n_2')).
  \end{split}
\end{equation}

Combining \eqref{m4} and estimates~\eqref{m5}--\eqref{m6} and applying the same arguments as in the proof of Lemma~\ref{lemAs1}, we arrive at the asymptotic equality
\begin{equation}\label{m8}
  \begin{split}
    I_k&=\int_{-\pi}^{\pi}\big\Vert \d_{x_d,k}^* S_{\n_2',n_1}\big\Vert_{(d-1)}\,{{\rm d}x_d}\\
    &\qquad\qquad+2\pi \int_{-\pi}^{\pi}\big\Vert \d_{x_d,k}^* F_{\n_2',n_1} \big\Vert_{(d-2)}\,{\rm d}x_d+\mathcal{O}(\log\log n_1\Pi(\n_2')).
  \end{split}
\end{equation}
Let us estimate the first integral in~\eqref{m8}. First, we observe that
\begin{equation}\label{m9}
  \begin{split}
&\int_{-\pi}^{\pi}\big\Vert \d_{x_d,k}^* S_{\n_2',n_1}\big\Vert_{(d-1)}\,{{\rm d}x_d}\\
&=\int_{-\pi}^{\pi} {\rm d}x_d\int_{-\pi+\frac{x_d}{2}}^{\pi+\frac{x_d}{2}}\,{\rm d}x_1 \int_{\T^{d-2}}
\Big| e^{in_1(x_d+2\pi k)}S_{\n_2',n_1}\Big(\x_2'-\frac{n_1}{\n_2'}(x_d+2\pi k),x_1-x_d\Big)\\
&\qquad\qquad\qquad\qquad\qquad\qquad\qquad\qquad\qquad\qquad\qquad\qquad\qquad-S_{\n_2',n_1}(\x_2',x_1)\Big|\,{\rm d}\x_2'.
  \end{split}
\end{equation}
Furthermore,
\begin{equation}\label{m10}
  \begin{split}
      &\int_{-\pi}^\pi {\rm d}x_d\int_{\frac1{\log n_1}<|x_1|\le \frac{3\pi}{2}}{\rm d}x_1\int_{\T^{d-2}}|S_{\n_2',n_1}(\x_2',x_1)|\,{\rm d}\x_2'\\
      &\le 4\pi\int_{\frac1{\log n_1}<|x_1|\le \frac{3\pi}{2}}\frac{{\rm d}x_1}{|x_1|}\int_{\T^{d-2}}|D_{\n_2'}(\x_2')|\,{\rm d}\x_2'
      =\mathcal{O}\(\log\log n_1 \Pi(\n_2')\)
  \end{split}
\end{equation}
and
\begin{equation}\label{m11}
  \begin{split}
      &\int_{-\pi}^\pi{\rm d}x_d\int_{|x_1|<\frac1{\log n_1}}\,{\rm d}x_1\int_{\T^{d-2}}\Big|S_{\n_2',n_1}\Big(\x_2'-(x_d+2\pi k)\frac{n_1}{\n_2'},x_1-x_d\Big)\Big|\,{\rm d}\x_2'\\
      &=\int_{-\pi}^\pi{\rm d}x_d\int_{|x_1|<\frac1{\log n_1}}\,{\rm d}x_1\int_{\T^{d-2}}\big|S_{\n_2',n_1}\(\x_2',x_1-x_d\)\big|\,{\rm d}\x_2'\\
      %&=\int_{-\pi}^\pi{\rm d}x_d\int_{|x_1|<\frac1{\log n_1}}{{\rm d}x_1}\int_{\T^{d-2}}\Big|\frac{e^{in_1(x_1-x_d)}
%      D_{\n_2'}\(\x_2'-\frac{n_1}{\n_2'}(x_1-x_d)\)-D_{\n_2'}\(\x_2'\)}{x_1-x_d}\Big|\,{\rm d}\x_2'\\
      &=\int_{|x_1|<\frac1{\log n_1}}{{\rm d}x_1}\int_{|x_1-x_d|<\frac1{n_1}}{\rm d}x_d \int_{\T^{d-2}}|\dots| {\rm d}\x_2'\\
      &\qquad\qquad\qquad\qquad+\int_{|x_1|<\frac1{\log n_1}}{{\rm d}x_1}\int_{\underset{x_d\in [-\pi,\pi]}{\frac1{n_1}<|x_1-x_d|}}{\rm d}x_d \int_{\T^{d-2}}|\dots| {\rm d}\x_2'
      :=L_1+L_2.
  \end{split}
\end{equation}
%Note that without loss of generality we can suppose that $n_1>3$
Let us consider $L_2$. As above, using~\eqref{e2} and~\eqref{d1}, we derive
\begin{equation}\label{m12}
  \begin{split}
L_2 \le 2\int_{|x_1|<\frac1{\log n_1}}{{\rm d}x_1}\int_{\underset{x_d\in [-\pi,\pi]}{\frac1{n_1}<|x_1-x_d|}}
\frac{{\rm d}x_d}{|x_1-x_d|} \int_{\T^{d-2}} |D_{\n_2'}\(\x_2'\)|\,{\rm d}\x_2'=\mathcal{O}\(\Pi(\n_2')\).
  \end{split}
\end{equation}
Further, estimating $L_1$, we obtain
\begin{equation}
\label{m13}
\begin{split}
L_1 &\le  \int_{|x_1|<\frac1{\log n_1}}{{\rm d}x_1}\int_{|x_1-x_d|<\frac1{n_1}}{\rm d}x_d\\
&\qquad\qquad\qquad\qquad\qquad\qquad\int_{\T^{d-2}} \Big|\frac{e^{in_1(x_1-x_d)}-1}{x_1-x_d}
D_{\n_2'}\Big(\x_2'-(x_1-x_d)\frac{n_1}{\n_2'}\Big) \Big|\, {\rm d}\x_2'\\
&+\int_{|x_1|<\frac1{\log n_1}}{{\rm d}x_1}\int_{|x_1-x_d|<\frac1{n_1}}{\rm d}x_d \int_{\T^{d-2}}
\bigg|\frac{D_{\n_2'}\big(\x_2'-(x_1-x_d)\frac{n_1}{\n_2'}\big)-D_{\n_2'}\(\x_2'\)}{x_1-x_d}\bigg|\,{\rm d}\x_2'\\
&:=L_{11}+L_{12}.
\end{split}
\end{equation}
Using the boundedness of the function $\frac{e^{it}-1}{t}$ and \eqref{d1}, we get
\begin{equation}\label{m14}
  \begin{split}
L_{11}=\mathcal{O}\(\frac1{\log n_1} \Pi(\n_2')\).
  \end{split}
\end{equation}
To estimate $L_{12}$, some preparation is needed. For $k=0,\dots,d-3$, we denote
$$
\bm{\mu}^{(k)}:=\(\frac{n_1}{n_2},\dots,\frac{n_1}{n_{d-1-k}},0,\dots,0\)\in \R^{d-2}
$$
and $\bm{\mu}^{(d-2)}:=(0,\dots,0)\in \R^{d-2}$. Using the representation
\begin{equation*}
  \begin{split}
&D_{\n_2'}\(\x_2'-(x_1-x_d)\frac{n_1}{\n_2'}\)-D_{\n_2'}\(\x_2'\)\\
&\qquad\qquad=\sum_{k=0}^{d-3}\(D_{\n_2'}\(\x_2'-(x_1-x_d)\bm{\mu}^{(k)}\)-D_{\n_2'}\(\x_2'-(x_1-x_d)\bm{\mu}^{(k+1)}\)\)
   \end{split}
\end{equation*}
and inequalities~\eqref{sp1} and~\eqref{sp2}, we derive
\begin{equation}\label{m15}
  \begin{split}
L_{12}&\le \sum_{k=0}^{d-3} \int_{|x_1|<\frac1{\log n_1}}{{\rm d}x_1}\int_{|x_1-x_d|<\frac1{n_1}}\,{\rm d}x_d \\ &\qquad\qquad  \int_{\T^{d-2}}\bigg|\frac{D_{\n_2'}\(\x_2'-(x_1-x_d)\bm{\mu}^{(k)}\)-D_{\n_2'}\(\x_2'-(x_1-x_d)\bm{\mu}^{(k+1)}\)}{x_1-x_d}\bigg|\,{\rm d}\x_2'\\
&\le  \sum_{k=0}^{d-3} \int_{|x_1|<\frac1{\log n_1}}{{\rm d}x_1}\int_{|x_1-x_d|<\frac1{n_1}}\, {\rm d}x_d \int_{\T^{d-2}} \frac{n_1}{n_{d-1-k}}
\Big|\frac{\partial}{\partial x_{d-1-k}} D_{\n_2'}(\x_2')\Big|\,{\rm d}\x_2'\\
&\le \frac{(d-2)}{\log n_1} \Vert D_{\n_2'}\Vert_{(d-2)}=\mathcal{O}\(\frac1{\log n_1}\Pi(\n_2')\).
   \end{split}
\end{equation}
Combining~\eqref{m11}--\eqref{m15}, we obtain
\begin{equation}\label{m16}
  \begin{split}
&\int_{-\pi}^\pi {\rm d}x_d\int_{|x_1|<\frac1{\log n_1}}\,{\rm d}x_1\int_{\T^{d-2}}\Big|S_{\n_2',n_1}\Big(\x_2'-(x_d+2\pi k)\frac{n_1}{\n_2'},x_1-x_d\Big)\Big|\,{\rm d}\x_2'\\
&\qquad =\mathcal{O}\(\Pi(\n_2')\).
   \end{split}
\end{equation}
Further, combining~\eqref{m9}, \eqref{m10}, and~\eqref{m16} and using the same arguments as in the proof of Lemma~\ref{ld2},
we see that
\begin{equation*}
%\label{m17}
  \begin{split}
&\int_{-\pi}^{\pi}\Vert \d_{x_d,k}^* S_{\n_2',n_1}\Vert_{(d-1)}\,{{\rm d}x_d}=4\pi \Vert S_{\n_2',n_1}\Vert_{(d-1)}+\mathcal{O}\(\log\log n_1 \Pi(\n_2')\).
  \end{split}
\end{equation*}
The above asymptotic equality and inequality~\eqref{m8} imply that
\begin{equation}\label{m18}
  \begin{split}
    I_k&=4\pi \big\Vert S_{\n_2',n_1}\big\Vert_{(d-1)}\\
    &\qquad\qquad+2\pi \int_{-\pi}^{\pi}\big\Vert \d_{x_d,k}^* F_{\n_2',n_1} \big\Vert_{(d-2)}{{\rm d}x_d}+\mathcal{O}\(\log\log n_1\Pi(\n_2')\)\\
    &=4\pi \Vert D_{\n'}\Vert_{(d-1)}+2\pi \int_{-\pi}^{\pi}\(\big\Vert \d_{n_1(x_d+2\pi k),\frac1{\n_2'}}
    F_{\n_2',n_1} \big\Vert_{(d-2)}-2\big\Vert F_{\n_2',n_1}\big\Vert_{(d-2)}\)\,{{\rm d}x_d}\\
    &\qquad\qquad\qquad\qquad\qquad\qquad\qquad\qquad\qquad\qquad\qquad\qquad+\mathcal{O}\(\log\log n_1\Pi(\n_2')\)\\
    &=4\pi \Vert D_{\n'}\Vert_{(d-1)}+2\pi\mathfrak{f}_k+\mathcal{O}\(\log\log n_1\Pi(\n_2')\),
  \end{split}
\end{equation}
where
$$
\mathfrak{f}_k:=\int_{-\pi}^{\pi}\(\big\Vert \d_{n_1(x_d+2\pi k),\frac1{\n_2'}} F_{\n_2',n_1} \big\Vert_{(d-2)}-2\big\Vert F_{\n_2',n_1}\big\Vert_{(d-2)}\)\,{dx_d}.
$$
Combining~\eqref{m18} and~\eqref{m3} yields
\begin{equation}\label{m19}
  \begin{split}
     J_2&=2\log \frac{n_d}{n_1}\Vert D_{\n'}\Vert_{(d-1)} +\sum_{1\le |k|\le [\frac{n_d}{2n_1}]}\frac{\mathfrak{f}_k}{|k|} +\mathcal{O}(\log\log n_1\Pi(\n_2)).\\
  \end{split}
\end{equation}
Now, combining~\eqref{m1}, \eqref{m2}, and~\eqref{m19}, we derive
\begin{equation*}
%\label{m20}
  \begin{split}
  \Vert S_{\n}\Vert_{(d)}=\Vert D_{n_1,\n'}\Vert_{(d)}+2\log\frac{n_d}{n_1}\Vert D_{\n'}\Vert_{(d-1)}&-2\pi\Vert F_{\n',n_1}\Vert_{(d-1)}\\
  &+\sum_{1\le |k|\le [\frac{n_d}{2n_1}]}\frac{\mathfrak{f}_k}{|k|} +\mathcal{O}\(\log\log n_1\Pi(\n_2)\).
  \end{split}
\end{equation*}
Together with~\eqref{As1} and~\eqref{nM}, this implies
\begin{equation*}
%\label{m21}
  \begin{split}
 \Vert D_{\n}\Vert_{(d)}=\Vert D_{n_1,\n'}\Vert_{(d)}&+2\log\frac{n_d}{n_1}\Vert D_{\n'}\Vert_{(d-1)}+2\pi\(\Vert F_{\n}\Vert_{(d-1)}-\Vert F_{\n',n_1}\Vert_{(d-1)}\)\\
  &+\sum_{1\le |k|\le [\frac{n_d}{n_1}]}\frac{\mathfrak{f}_k}{|k|} +\mathcal{O}\(\log\log n_d \Pi(\n')\).
  \end{split}
\end{equation*}
Repeating the above reasoning $d-2$-times in other variables, we obtain~\eqref{eM}.
\end{proof}

We are now in a position to complete the proof of Theorem~\ref{mar}, mainly by incorporating the above lemmas
in an appropriate way.

\begin{proof}[Proof of Theorem~\ref{mar}] The proof is inductive. By Lemma \ref{main}, we have
\begin{equation*}
\begin{split}
    \Vert D_{\n}\Vert_{(d)}=\Vert D_{n_1\mathbf{1}^d}\Vert_{(d)}&+2\sum_{k=1}^{d-1}\log\frac{n_{d-k}}{n_1} \big\Vert D_{n_1\mathbf{1}^k,\n_2^{d-k}}\big\Vert_{({d-1})}\\
    &+\mathfrak{F}_{\n^d}^d+\mathcal{O}\(\log\log n_1 \Pi(\n_2)\),
\end{split}
\end{equation*}
Applying now the $(d-1)$-dimensional version of (\ref{mainfor}) to each of the summands in the sum
of the right-hand side and taking into account that some of the logarithmic bounds are controlled
by the ''stronger" ones, we arrive at
\begin{equation*}
\begin{split}
%\label{mainfor2}
    \Vert D_{\n}\Vert_{(d)}&=\Vert D_{n_1\mathbf{1}^d}\Vert_{(d)}+\sum_{k=1}^{d-1}2^k \sum\limits_{\bm{\eta}: |\bm{\eta}|=k} \prod\limits_{j: \eta_j=1}
    \log\frac{n_{j}}{n_1} \big\Vert D_{n_1\mathbf{1}^{d-k}}\big\Vert_{({d-k})}\\
    &+\sum\limits_{k=2}^d \mathfrak{F}_{\n^k}^k\sum\limits_{\bm{\eta}: |\bm{\eta}|=d-k}\,\prod\limits_{i: \eta_i=1}\log\frac{n_{d-i+1}}{n_1}
    +\mathcal{O}\({\log\log n_1}\Pi(\n_2) \).
\end{split}
\end{equation*}
Then, applying asymptotic formula~\eqref{eS} and simple calculations, we obtain the required result.
\end{proof}

We now proceed to the proof of Corollary \ref{kuztype}.

\begin{proof} One can see that (\ref{mainfor1}) is (\ref{mainfor}) without any $\mathfrak{F}_{\n^k}^k$ and accompanied factors.
To establish this, it suffices to prove that the $L_1$ norm of $F_{\bm{n}}(\bm{x}')$ is of smaller growth than the remainder terms.
Putting  $n_d=\l_j n_j+p_j$ in $\{\L_d(\bm{k}')\}$ reduces the latter to $\{\sum\limits_{j=1}^{d-1} \frac{p_j k_j}{n_j}\}$.
Each $\frac{p_j k_j}{n_j}\in [l_j,l_j+1)$, where $l_j$ varies from $0$ to $[\L_j(\k^{j-1})]-1$. For each $(k_1,\dots,k_{d-1})$, there is
$(l_1,\dots,l_{d-1})$ such that
$$
l_1+\dots+l_{d-1} \le \sum\limits_{j=1}^{d-1} \frac{p_j k_j}{n_j}<l_1+\dots+l_{d-1}+1
$$
for certain values of $l_1,\dots,l_{d-1}$. Correspondingly,
$$
F_{\bm{n}}(\bm{x}')=\sum_{\bm{k}'=0}^{[\bm{\L}^{d-1}]}\left(\frac{p_1k_1}{n_1}-l_1+\dots+\frac{p_{d-1}k_{d-1}}{n_{d-1}}-l_{d-1}\right) e^{i(\bm{k}',\,\bm{x}')}.$$
The $L_1(\mathbb T^{d-1})$ norm is now dominated by the sum of the $L_1(\mathbb T^{d-1})$ norms of
\begin{equation}\label{sepjn}
\sum_{\bm{k}'=0}^{[\bm{\L}^{d-1}]}\left(\frac{p_jk_j}{n_j}-l_j\right)e^{i(\bm{k}',\,\bm{x}')},
\end{equation}
for $j=1,\dots,d-1$, each of which, in turn, is bounded by two norms. One is $\frac{p_j}{n_j}\|
\frac{\partial}{\partial x_j}D_{ \bm{n}'}\|_{(d-1)}$. By (\ref{sp2}), it is dominated by $p_j
\|D_{ \bm{n}'}\|_{(d-1)}$. By (\ref{d1}), it is $\mathcal{O}(\log n_1\dots\log n_{d-1})$. For the second norm coming from (\ref{sepjn}), we take it of
$$
\sum\limits_{k_1=0}^{\L_1}\dots\sum\limits_{k_{j-1}=0}^{\L_{j-1}(\bm{k}^{j-2})}
 \sum\limits_{l_j=0}^{p_j-1} \sum\limits_{k_j=[l_jn_j/p_j]}^{[(l_j+1)n_j/p_j]} l_j
\sum\limits_{k_{j+1}=0}^{\L_{j+1}(\bm{k}^{j})}\dots
\sum\limits_{k_{d-1}=0}^{\L_{d-1}(\bm{k}^{d-2})}e^{i(\bm{k}',\,\bm{x}')}.
$$
Reordering summation so that the $j$-th some be on the first place and recounting the inner sum in the corresponding summand
as running from $0$ to $[(l_j+1)n_j/p_j]-[l_jn_j/p_j]$, we derive that the corresponding norm is dominated by
$p_j^2 \log n_1\dots\log n_{j-1}\log n_{j+1}\dots\log n_{d-1}$. All these bounds definitely enter in the remainder term of (\ref{mainfor}), as desired.
\end{proof}

\section*{Acknowledgements}

This project started during the visit of the first author to the Gelbart Institute at Department
of Mathematics in Bar-Ilan University, Ramat-Gan, Israel, and continued when the second authors visited
the University of L\"ubeck. The authors appreciate the hospitality and stimulating atmosphere
they enjoyed during these visits. The authors are indebted to A.N.~Podkorytov for his valuable comments
which helped to simplify the formulation of our main results. The first author was partially supported by DFG project KO 5804/1-1.

\end{document}